\newtheorem{theorem}{Theorem}[section]
\theoremstyle{definition}
\newtheorem{definition}[theorem]{Definition}
\newtheorem{example}[theorem]{Example}
\newtheorem{proposition}[theorem]{Proposition}
\newtheorem{corollary}[theorem]{Corollary}
\newtheorem{lem}[theorem]{Lemma}
\theoremstyle{remark}
\numberwithin{equation}{section}
\begin{document}

\title{  Weakly $S$-prime hyperideals }

\author{Mahdi Anbarloei}
\address{Department of Mathematics, Faculty of Sciences,
Imam Khomeini International University, Qazvin, Iran.
}

\email{m.anbarloei@sci.ikiu.ac.ir}


\subjclass[2020]{  20N20, 16Y20  }


\keywords{  multiplicative set, weakly $n$-ary $S$-prime hyperideal, strongly weakly  $n$-ary $S$-prime hyperideal.}

\begin{abstract}
 In this paper, we aim to introduce weakly $n$-ary $S$-prime hyperideals in a commutative Krasner $(m,n)$-hyperring.

\end{abstract}
\maketitle

\section{Introduction} 
In recent years, prime ideals and their expansions have gained significant attention in commutative algebra, drawing the attention of numerous authors. One of these extensions called  $S$-prime ideals was introduced via a multiplicative subset of a  commutative ring in \cite{hamed}. Afterward, the idea of weakly $S$-prime ideals was investigated by Almahdi et al. in \cite{Almahdi}. This study demonstrated that weakly $S$-prime ideals exhibits numerous properties similar to those of weakly prime ideals. Let $A$ be a commutative ring,  $Q$ a ideal of $A$ and $S \subseteq A$  a multiplicative set. The  ideal $Q$  is said to be weakly $S$-prime if there exists $s \in S$ such that, $0 \neq xy \in Q$ for $x,y \in A$ implies $sx \in Q$ or $sy \in Q$.  

Hyperstructures represent a natural extension of classical algebraic structures and they were
defined by the French mathematician F. Marty. In 1934, Marty \cite{s1} defined the concept of a hypergroup as a generalization of groups during the $8^{th}$ Congress of the Scandinavian Mathematicians. A comprehensive
review of the theory of hyperstructures can be found in \cite {amer2, s2, s3, davvaz1, davvaz2, s4, s10, jian}. The simplest algebraic hyperstructures which possess the properties of closure and associativity are called semihypergroups. 
$n$-ary semigroups and $n$-ary groups are algebras with one $n$-ary operation which is associative and invertible in a generalized sense. The notion of investigations of $n$-ary algebras goes back to Kasner’s lecture \cite{s5} at a scientific meeting in 1904. In 1928, Dorente wrote the first paper concerning the theory of $n$-ary groups \cite{s6}. Later on, Crombez and
Timm \cite{s7, s8} defined and described the notion of the $(m, n)$-rings and their quotient structures. Mirvakili and Davvaz [20] defined $(m,n)$-hyperrings and obtained several results in this respect. In \cite{s9}, they introduced and illustrated a generalization of the notion of a hypergroup in the sense of Marty and a generalization of an $n$-ary group, which is called $n$-ary hypergroup.
The $n$-ary structures has been studied in \cite{l1, l2, l3, ma, rev1}. Mirvakili and Davvaz \cite{cons} defined $(m,n)$-hyperrings and obtained several results in this respect.

It was Krasner, who introduced one important class of hyperrings, where the addition is a hyperoperation, while the multiplication is an ordinary binary operation, which is called Krasner hyperring. In \cite{d1}, a generalization of the Krasner hyperrings, which is a subclass of $(m,n)$-hyperrings, was defined by Mirvakili and Davvaz. It is called Krasner $(m,n)$-hyperring. Ameri and Norouzi in \cite{sorc1} introduced some important
hyperideals such as Jacobson radical, n-ary prime and primary hyperideals, nilradical, and n-ary multiplicative subsets of Krasner $(m, n)$-hyperrings. Afterward, the notion of $(k,n)$-absorbing hyperideal  were studied by Hila et al. \cite{rev2}. The idea was extended to weakly $(k,n)$-absorbing hyperideals by Davvaz et al. in \cite{davvazz}.
The notion of $S$-prime hyperideals  which is an extension of the prime hyperideals via an $n$-ary multiplicative subset of a Krasner $(m,n)$-hyperring was investigated in \cite{mah6}. \\

In this paper, motivated by the research work on weakly $S$-prime ideals, the notion of weakly $n$-ary $S$-prime hyperideals is introduced and investigated in a commutative Krasner $(m,n)$-hyperring $A$ where $S \subseteq A$ is a multiplicative set. Among many results in this paper, we prove that every  $n$-ary prime hyperideals of $A$ is a weakly $n$-ary $S$-prime hyperideal but the converse may not be always true   in Example \ref{madar}. It is shown that every weakly $n$-ary $S$-prime hyperideal of $A$ is $n$-ary prime if and only if $A$ is an  $n$-ary hyperintegral domain and every $S$-prime hyperideal of $A$ is $n$-ary prime in Theorem \ref{13}. We obtain that if $Q$ is a strongly weakly $n$-ary $S$-prime hyperideal of $A$ that is not $n$-ary $S$-prime, then $g(Q^{(n)})=0$ in Theorem \ref{14}. In theorem \ref{naka}, we propose a type of Nakayama$^,$s Lemma for strongly weakly $n$-ary $S$-prime hyperideals. Moreover, we conclude that if $Q$ is a strongly weakly $n$-ary $S$-prime hyperideal of $A$ that is not $n$-ary $S$-prime, then $g(g(s,rad(0),1_A^{(n-2)}),Q^{(n-1)})=0$ for some $s \in S$ in Theorem \ref{18}.
Finally, we present some characterizations of this notion on cartesian product of commutative Krasner $(m,n)$-hyperring. 
\section{Preliminaries}
Recall first the definitions and basic terms from the hyperrings theory. \\ 
Let $A$ be a non-empty set and $P^*(A)$ be the
set of all the non-empty subsets of $H$. Then the mapping $f : H^n \longrightarrow P^*(A)$
is called an $n$-ary hyperoperation and the algebraic system $(A, f)$ is called an $n$-ary hypergroupoid. For non-empty subsets $U_1,..., U_n$ of $H$, define
\[f(U^n_1) = f(U_1,..., U_n) = \bigcup \{f(x^n_1) \ \vert \ a_i \in U_i, i = 1,..., n \}.\]
The sequence $x_i, x_{i+1},..., x_j$ 
will be denoted by $x^j_i$. For $j< i$, $x^j_i$ is the empty symbol. Using this notation,
\[f(x_1,..., x_i, y_{i+1},..., y_j, z_{j+1},..., z_n)\]
will be written as $f(x^i_1, y^j_{i+1}, z^n_{j+1})$. The expression will be written in the form \[f(x^i_1, y^{(j-i)}, z^n_{j+1}),\] when $y_{i+1} =... = y_j = y$. 
If for every $1 \leq i < j \leq n$ and all $x_1, x_2,..., x_{2n-1} \in A$, 
\[f\bigg(x^{i-1}_1, f(x_i^{n+i-1}), x^{2n-1}_{n+i}\bigg) = f\bigg(x^{j-1}_1, f(x_j^{n+j-1}), x_{n+j}^{2n-1}\bigg)\]
then the n-ary hyperoperation $f$ is called associative. An $n$-ary hypergroupoid with the
associative $n$-ary hyperoperation is called an $n$-ary semihypergroup. 

An $n$-ary hypergroupoid $(A, f)$ in which the equation $b \in f(a_1^{i-1}, x_i, a_{ i+1}^n)$ has a solution $x_i \in A$
for every $a_1^{i-1}, a_{ i+1}^n,b \in A$ and $1 \leq i \leq n$, is called an $n$-ary quasihypergroup, when $(A, f)$ is an $n$-ary
semihypergroup, $(A, f)$ is called an $n$-ary hypergroup. 

An $n$-ary hypergroupoid $(A, f)$ is commutative if for all $ \sigma \in \mathbb{S}_n$, the group of all permutations of $\{1, 2, 3,..., n\}$, and for every $a_1^n \in A$ we have $f(a_1,..., a_n) = f(a_{\sigma(1)},..., a_{\sigma(n)})$.
If $a_1^n \in A$, then $(a_{\sigma(1)},..., a_{\sigma(n)})$ is denoted by $a_{\sigma(1)}^{\sigma(n)}$.

If $f$ is an $n$-ary hyperoperation and $t = l(n- 1) + 1$, then $t$-ary hyperoperation $f_{(l)}$ is given by

\[f_{(l)}(x_1^{l(n-1)+1}) = f\bigg(f(..., f(f(x^n _1), x_{n+1}^{2n -1}),...), x_{(l-1)(n-1)+1}^{l(n-1)+1}\bigg).\]
\begin{definition}
\cite{d1} A non-empty subset $K$ of an $n$-ary hypergroup $(A, f)$ is said to be 
an $n$-ary subhypergroup of $A$ if $(K,f)$ is an $n$-ary hypergroup.
An element $e \in A$ is called a scalar neutral element if $x = f(e^{(i-1)}, x, e^{(n-i)})$, for every $1 \leq i \leq n$ and
for every $x \in A$. 

An element $0$ of an $n$-ary semihypergroup $(A, g)$ is called a zero element if for every $x^n_2 \in A$, 
$g(0, x^n _2) = g(x_2, 0, x^n_ 3) = ... = g(x^n _2, 0) = 0$.
If $0$ and $0^ \prime $ are two zero elements, then $0 = g(0^ \prime , 0^{(n-1)}) = 0 ^ \prime$ and so the zero element is unique. 
\end{definition}
\begin{definition}
\cite{l1} Let $(A, f)$ be a $n$-ary hypergroup. $(A, f)$ is called a canonical $n$-ary
hypergroup if:\\
(1) There exists a unique $e \in A$, such that for every $x \in A, f(x, e^{(n-1)}) = x$;\\
(2) For all $x \in A$ there exists a unique $x^{-1} \in A$, such that $e \in f(x, x^{-1}, e^{(n-2)})$;\\
(3) If $x \in f(x^n _1)$, then for all $i$, $x_i \in f(x, x^{-1},..., x^{-1}_{ i-1}, x^{-1}_ {i+1},..., x^{-1}_ n)$.

$e$ is said to be the scalar identity of $(A, f)$ and $x^{-1}$ is the inverse of $x$. Notice that the inverse of $e$ is $e$.
\end{definition}
\begin{definition}
\cite{d1} $(A, f, g)$, or simply $A$, is said to be a Krasner $(m, n)$-hyperring if:\\
(1) $(A, f$) is a canonical $m$-ary hypergroup;\\
(2) $(A, g)$ is a $n$-ary semigroup;\\
(3) The $n$-ary operation $g$ is distributive with respect to the $m$-ary hyperoperation $f$ , i.e., for every $a^{i-1}_1 , a^n_{ i+1}, x^m_ 1 \in A$, and $1 \leq i \leq n$,
\[g\bigg(a^{i-1}_1, f(x^m _1 ), a^n _{i+1}\bigg) = f\bigg(g(a^{i-1}_1, x_1, a^n_{ i+1}),..., g(a^{i-1}_1, x_m, a^n_{ i+1})\bigg);\]
(4) $0$ is a zero element (absorbing element) of the $n$-ary operation $g$, i.e., for every $x^n_ 2 \in R$ , 
$g(0, x^n _2) = g(x_2, 0, x^n _3) = ... = g(x^n_ 2, 0) = 0$.
\end{definition}
Throughout this paper, $(A, f, g)$ denotes a commutative Krasner $(m,n)$-hyperring.

A non-empty subset $B$ of $A$ is called a subhyperring of $A$ if $(B, f, g)$ is a Krasner $(m, n)$-hyperring. 
The non-empty subset $Q$ of $(A,f,g)$ is a hyperideal if $(Q, f)$ is an $m$-ary subhypergroup
of $(A, f)$ and $g(x^{i-1}_1, Q, x_{i+1}^n) \subseteq Q$, for every $x^n _1 \in A$ and $1 \leq i \leq n$.

\begin{definition} \cite{sorc1} Let $x$ be an element in a Krasner $(m,n)$-hyperring $A$. The hyperideal generated by $x$ is denoted by $<x>$ and defined as follows:
\[<x>=g(A,x,1^{(n-2)})=\{g(r,x,1^{(n-2)}) \ \vert \ r \in A\}.\]
Also, let $Q$ be a hyperideal of $A$. Then define $(Q:x)=\{a \in A \ \vert \ g(a,x,1_A^{(n-2)}) \in Q\}$. 
\end{definition}

\begin{definition} \cite{sorc1}
An element $x \in A$ is said to be invertible if there exists $y \in A$ such that $1_A=g(x,y,1_A^{(n-2)})$. Also,
the subset $U$ of $A$ is invertible if and only if every element of $U$ is invertible.
\end{definition}
\begin{definition}
\cite{sorc1} Let $Q \neq A$ be a hyperideal of a Krasner $(m, n)$-hyperring $A$. $Q$ is a prime hyperideal if for hyperideals $Q_1,..., Q_n$ of $A$, $g(Q_1^ n) \subseteq P$ implies that $Q_1 \subseteq Q$ or $Q_2 \subseteq Q$ or ...or $Q_n \subseteq Q$.
\end{definition}
\begin{lem} 
(Lemma 4.5 in \cite{sorc1}) Let $Q\neq A$ be a hyperideal of a Krasner $(m, n)$-hyperring $A$. Then $Q$ is a prime hyperideal if for all $x^n_ 1 \in A$, $g(x^n_ 1) \in Q$ implies that $x_1 \in Q$ or ... or $x_n \in Q$. 
\end{lem}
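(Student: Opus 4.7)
The plan is to prove that the element-wise condition ``$g(x_1^n)\in Q$ implies some $x_i\in Q$'' yields the ideal-theoretic definition of prime hyperideal stated in the preceding definition. (In the standard treatment this is actually an equivalence, so I will also sketch the reverse direction for context.) For the main implication, I would assume the element-wise hypothesis and let $Q_1,\ldots,Q_n$ be hyperideals of $A$ with $g(Q_1^n)\subseteq Q$. Then I would argue by contrapositive: suppose $Q_i\not\subseteq Q$ for every $i$, and choose some $x_i\in Q_i\setminus Q$ for each $i$. The product $g(x_1^n)$ lies in $g(Q_1^n)\subseteq Q$, so the hypothesis forces some $x_j\in Q$, contradicting $x_j\in Q_j\setminus Q$. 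Hence some $Q_i\subseteq Q$, which is precisely the definition of primeness.

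For the converse direction, I would suppose $Q$ is prime in the ideal-theoretic sense and $g(x_1^n)\in Q$, then form the principal hyperideals $\langle x_i\rangle=g(A,x_i,1_A^{(n-2)})$. A typical generator of $g(\langle x_1\rangle,\ldots,\langle x_n\rangle)$ has the form $g\bigl(g(r_1,x_1,1_A^{(n-2)}),\ldots,g(r_n,x_n,1_A^{(n-2)})\bigr)$, and by the generalised associativity and commutativity of $g$, combined with the neutrality of $1_A$, it rearranges to $g(s,g(x_1^n),1_A^{(n-2)})$ with $s=g(r_1^n)$. Since $g(x_1^n)\in Q$ and $Q$ is a hyperideal, this element lies in $Q$. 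Therefore $g(\langle x_1\rangle,\ldots,\langle x_n\rangle)\subseteq Q$, and primeness gives $\langle x_i\rangle\subseteq Q$ for some $i$. Because $x_i=g(1_A,x_i,1_A^{(n-2)})\in\langle x_i\rangle$, I would conclude $x_i\in Q$.

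The main technical obstacle lies in the rearrangement used in the converse: I would need to flatten the nested $g$-product into a $g_{(n+1)}$-expression on $n^2$ arguments, permute by commutativity so that the $r_i$'s, the $1_A$'s, and the $x_j$'s form separate blocks of size $n$, and then invoke the neutrality of $1_A$ to collapse the middle blocks of ones. This step is routine in commutative Krasner $(m,n)$-hyperrings but must be spelled out carefully the first time. Once the rearrangement is in hand, both directions are immediate applications of the definitions.
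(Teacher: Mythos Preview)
Your argument is correct and is the standard route: the contrapositive with witnesses $x_i\in Q_i\setminus Q$ handles the forward direction cleanly, and the converse via principal hyperideals, together with the rearrangement $g\bigl(g(r_1,x_1,1_A^{(n-2)}),\ldots,g(r_n,x_n,1_A^{(n-2)})\bigr)=g\bigl(g(r_1^n),g(x_1^n),1_A^{(n-2)}\bigr)$ (grouping the $n^2$ flattened factors into $n$ blocks and collapsing each all-$1_A$ block to $1_A$), is exactly right.

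There is, however, nothing in the present paper to compare against: this lemma is quoted from \cite{sorc1} (as ``Lemma~4.5'') and stated without proof here. The authors use it as a black box in the sequel. So your write-up is not an alternative to the paper's proof but rather a self-contained justification of a cited result; if you intend to include it, you might frame it that way and perhaps note that the full equivalence (not just the ``if'' direction printed in the statement) is what is actually used throughout the paper.
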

\begin{definition} \cite{sorc1} Let $Q$ be a hyperideal in a Krasner $(m, n)$-hyperring $A$ with
scalar identity. The radical (or nilradical) of $Q$, denoted by $rad(Q)$
is the hyperideal $\bigcap Q$, where
the intersection is taken over all prime hyperideals $Q$ which contain $Q$. If the set of all prime hyperideals containing $Q$ is empty, then $rad(Q)$ is defined to be $A$.
\end{definition}
Ameri and Norouzi showed that if $x \in rad(Q)$ then 
there exists $u \in \mathbb {N}$ such that $g(x^ {(u)} , 1_A^{(n-u)} ) \in Q$ for $u \leq n$, or $g_{(l)} (x^ {(u)} ) \in Q$ for $u = l(n-1) + 1$ \cite{sorc1}.
\begin{definition}
\cite{sorc1} Let $Q$ be a proper hyperideal of a Krasner $(m, n)$-hyperring $A$ with the scalar identity $1_A$. $Q$ is a primary hyperideal if $g(x^n _1) \in Q$ and $x_i \notin Q$ implies that $g(x_1^{i-1}, 1_A, x_{ i+1}^n) \in rad(Q)$ for some $1 \leq i \leq n$.
\end{definition}
If $Q$ is a primary hyperideal in a Krasner $(m, n)$-hyperring $A$ with the scalar identity $1_A$, then $rad(Q)$ is prime. (Theorem 4.28 in \cite{sorc1})
\begin{definition} \cite{sorc1}
Let $S \subseteq A$ be a non-empty subset of a Krasner $(m,n)$-hyperring $R$. $S$ is called an n-ary multiplicative if $g(s_1^n) \in S$ for $s_1,...,s_n \in S$.
\end{definition}

\section{weakly $n$-ary S-prime hyperideals}
In a very recent paper \cite{mah6}, the concept of $n$-ary prime hyperideals was generalized to $n$-ary $S$-prime hyperideals via an $n$-ary multiplicative subset  in a Krasner $(m,n)$-hyperring. Assume that $Q$ is a hyperideal of a commutative Krasner $(m,n)$-hyperring $A$ and $S \subseteq A$ is an $n$-ary multiplicative set such that $Q \cap S=\varnothing$. The hyperideal $Q$ is called $n$-ary $S$-prime if there exists an $s \in S$ such that for every $a_1^n \in A$ with $ g(a_1^n) \in Q$,  $g(s,a_i,1_A^{(n-2)}) \in Q$ for some $i \in \{1,\cdots,n\}$.  Our purpose here is  to introduce and study the notion of weakly $n$-ary $S$-prime hyperideals which constitutes a generalization of $n$-ary $S$-prime hyperideals. The weakly-version of $n$-ary $S$-prime hyperideals is defined as follows. 
\begin{definition}
Assume that $Q$ is a hyperideal of  $A$ and $S$ is an $n$-ary multiplicative subset of $A$ such that $Q \cap S=\varnothing$. We say that $Q$ is a weakly $n$-ary $S$-prime hyperideal of $A$ if there exists an $s \in S$ such that for all $a_1^n \in A$ if $0 \neq g(a_1^n) \in Q$, we have $g(s,a_i,1_A^{(n-2)}) \in Q$ for some $i \in \{1,\cdots,n\}$.  
\end{definition}
\begin{example} \label{madar}
The set $A=\{0,1,2,3\}$ with following 2-hyperoperation $``\boxplus"$ is a canonical 2-ary hypergroup

\hspace{1.5cm}
\[\begin{tabular}{c|c} 
$\boxplus$ & $0$ \ \ \ \ \ \ \ $1$ \ \ \ \ \ \ \ $2$ \ \ \ \ \ \ \ $3$
\\ \hline 0 & $0$\ \ \ \ \ \ \ $1$\ \ \ \ \ \ \ \ \ $2$ \ \ \ \ \ \ \ $3$ 
\\ $1$ & $1$ \ \ \ \ \ \ \ $I$ \ \ \ \ \ \ \ $3$ \ \ \ \ \ \ $J$
\\ $2$ & $2$ \ \ \ \ \ \ \ $3$ \ \ \ \ \ \ \ $0$ \ \ \ \ \ \ \ $1$
\\ $3$ & $3$ \ \ \ \ \ \ \ $J$ \ \ \ \ \ \ \ $1$ \ \ \ \ \ \ \ $I$
\end{tabular}\]

where  $I=\{0,1\}$ and $J=\{2,3\}$. Define a 4-ary operation $g$ on $A$ as $g(a_1^4)=2 $ if  $a_1^4 \in J$ or $0$ if otherwise.
In the  Krasner (2,4)-hyperring $(R,\oplus,g)$, the set $S=\{2,3\}$ is  4-ary multiplicative  and $Q=\{0\}$ is a weakly 4-ary $S$-prime hyperideal  but it is not 4-ary prime, because $g(1,1,2,3)=0 \in Q$ while $1,2,3 \notin Q$.
\end{example}
\begin{example} \label{ex}
Consider Krasner $(3, 3)$-hyperring $(A=\{0,1,2\},f,g)$ where   3-ary hyperoperation $f$ and 3-ary operation $g$ are defined as

$f(0,0,0)=0, \ \ \  f(0,0,2)=2, \ \ \ f(0,1,1)=1, \ \ \ f(1,1,1)=1, \ \ \ f(2,2,2)=2,$

$ f(0,0,1)=1,\ \ \ f(0,2,2)=2,\ \ \ f(1,1,2)=f(1,2,2)=f(0,1,2)=A,$

$ g(1,1,1)=1,\ \ \ \ g(1,1,2)=g(1,2,2)=g(2,2,2)=2,$\\
and  $g(0,a_1^2)=0$   for $a_1^2 \in A$.\\
In the hyperring, the set $S=\{1,2\}$ is 3-ary multiplicative and   $Q=\{0,2\}$ is a  weakly 3-ary $S$-prime hyperideal of $A$.
\end{example}
\begin{proposition} \label{11}
Let $S \subseteq  A$ be an $n$-ary multiplicative set, $Q$   a hyperideal of $A$ with $Q \cap S=\varnothing$ and $Q_1^{n-1}$ some hyperideals of $A$ such that $Q_j \cap S \neq \varnothing $ for each $j \in \{1,\cdots,n-1\}$. If $Q$ is a weakly $n$-ary $S$-prime hyperideal of $A$, then $g(Q_1^{n-1},Q)$ is a weakly $n$-ary $S$-prime hyperideal of $A$.
\end{proposition}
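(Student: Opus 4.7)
The plan is to extract a universal witness $s \in S$ from the weakly $n$-ary $S$-prime property of $Q$, then ``multiply'' it by chosen elements of $Q_j \cap S$ to build a witness that lands inside the smaller set $g(Q_1^{n-1},Q)$.

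First I would check the disjointness hypothesis from the definition: since $Q$ is a hyperideal, the absorbing property gives $g(Q_1^{n-1},Q) \subseteq Q$, and therefore $g(Q_1^{n-1},Q) \cap S \subseteq Q \cap S = \varnothing$. Next, let $s \in S$ be the witness coming from the assumption that $Q$ is weakly $n$-ary $S$-prime, and for each $j \in \{1,\ldots,n-1\}$ pick an element $s_j \in Q_j \cap S$, which exists by hypothesis. Set
\[
t \;:=\; g(s_1, s_2, \ldots, s_{n-1}, s).
\]
Because $S$ is $n$-ary multiplicative, $t \in S$, and this will be the candidate witness for $g(Q_1^{n-1},Q)$.

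Now suppose $a_1^n \in A$ with $0 \neq g(a_1^n) \in g(Q_1^{n-1},Q)$. By the containment in the first paragraph, $0 \neq g(a_1^n) \in Q$, so the weakly $n$-ary $S$-prime property of $Q$ yields an index $i \in \{1,\ldots,n\}$ with $g(s, a_i, 1_A^{(n-2)}) \in Q$. The key calculation is then to rewrite, using the $n$-ary associativity of $g$ (taking the inner $g$ in the last $n$ slots),
\[
g\bigl(t, a_i, 1_A^{(n-2)}\bigr) \;=\; g\bigl(s_1,\ldots,s_{n-1},\, g(s, a_i, 1_A^{(n-2)})\bigr).
\]
Since $s_j \in Q_j$ for each $j$ and $g(s,a_i,1_A^{(n-2)}) \in Q$, the right-hand side belongs to $g(Q_1^{n-1},Q)$ by definition. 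Thus $t$ witnesses that $g(Q_1^{n-1},Q)$ is weakly $n$-ary $S$-prime.

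The only delicate point is the associativity rewrite in the last step, where one must invoke the axiom $g_{(2)}(x_1^{2n-1}) = g(x_1^{n-1}, g(x_n^{2n-1}))$ with $x_j = s_j$ for $j \le n-1$, $x_n = s$, $x_{n+1} = a_i$, and $x_{n+2} = \cdots = x_{2n-1} = 1_A$; everything else is either a direct appeal to hyperideal absorption, $n$-ary multiplicativity of $S$, or the hypothesis on $Q$.
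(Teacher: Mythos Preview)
Your proof is correct and follows essentially the same route as the paper's: verify $g(Q_1^{n-1},Q)\cap S=\varnothing$ via the inclusion $g(Q_1^{n-1},Q)\subseteq Q$, take the witness $s$ for $Q$ together with elements $s_j\in Q_j\cap S$, form $t=g(s_1^{n-1},s)\in S$, and use $n$-ary associativity to rewrite $g(t,a_i,1_A^{(n-2)})=g(s_1^{n-1},g(s,a_i,1_A^{(n-2)}))\in g(Q_1^{n-1},Q)$. Your write-up is in fact slightly more careful than the paper's in that you fix the single witness $s$ \emph{before} choosing $a_1^n$, as the definition requires, and you spell out the associativity step explicitly.
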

\begin{proof}
Assume that $0 \neq g(a_1^n) \in g(Q_1^{n-1},Q)$ for $a_1^n \in A$. Clearly,  $g(Q_1^{n-1},Q) \cap S=\varnothing$. Since $Q$ is a weakly $n$-ary $S$-prime hyperideal of $A$ and $0 \neq g(a_1^n) \in Q $, there exists an $s \in S$ such that $g(s,a_i,1_A^{(n-2)}) \in Q$ for some $i \in \{1,\cdots,n\}$. Take any $s_j \in  Q_j \cap S$ for each $j \in \{1,\cdots,n-1\}$.  So $g(s_1^{n-1},s) \in S$. Therefore we get $g(g(s_1^{n-1},s),a_i,1_A^{n-2})=g(s_1^{n-1},g(s,a_i,1_A^{(n-2)})) \in g(Q_1^{n-1},Q)$. Thus $g(Q_1^{n-1},Q)$ is a weakly $n$-ary $S$-prime hyperideal of $A$.
\end{proof}
\begin{proposition}\label{12}
Let $S \subseteq  A$ be an $n$-ary multiplicative set and $Q$ be   a hyperideal of $A$ with $Q \cap S=\varnothing$.  If  $P$ is a hyperideal of $A$ such that $P \cap S \neq \varnothing $ and $Q$ is a weakly $n$-ary $S$-prime hyperideal of $A$, then $Q \cap P$ is a weakly $n$-ary $S$-prime hyperideal of $A$.
\end{proposition}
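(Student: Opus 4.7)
The plan is to verify the defining condition for $Q \cap P$ directly, reusing the fixed witness $s \in S$ supplied by the hypothesis that $Q$ is weakly $n$-ary $S$-prime. First, $Q \cap P$ is a hyperideal of $A$ (intersection of two hyperideals), and $(Q \cap P) \cap S \subseteq Q \cap S = \varnothing$, so the non-meet requirement in the definition is automatic.

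Next, I would choose, once and for all, an element $s_1 \in P \cap S$ (using the assumption $P \cap S \neq \varnothing$) and set $s' := g(s_1^{(n-1)}, s)$. Because $s_1, \ldots, s_1, s$ are all in $S$ and $S$ is $n$-ary multiplicative, $s' \in S$. Also, since $s_1 \in P$ and $P$ is a hyperideal, $s' \in P$ as well. This $s'$ will serve as the single witness of the weakly $S$-prime condition for $Q \cap P$.

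Now take any $a_1, \ldots, a_n \in A$ with $0 \neq g(a_1^n) \in Q \cap P$. In particular, $0 \neq g(a_1^n) \in Q$, so by the weakly $n$-ary $S$-prime property of $Q$ with witness $s$ there is an index $i \in \{1, \ldots, n\}$ with $g(s, a_i, 1_A^{(n-2)}) \in Q$. Using associativity (and commutativity) of $g$ to flatten both expressions to the same $(2n-1)$-tuple $(s_1, \ldots, s_1, s, a_i, 1_A, \ldots, 1_A)$, I would rewrite
\[
g(s', a_i, 1_A^{(n-2)}) = g\bigl(s_1^{(n-1)}, g(s, a_i, 1_A^{(n-2)})\bigr).
\]
The right-hand side lies in $Q$ because $g(s, a_i, 1_A^{(n-2)}) \in Q$ and $Q$ is a hyperideal, and it lies in $P$ because $s_1 \in P$ and $P$ is a hyperideal. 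Hence $g(s', a_i, 1_A^{(n-2)}) \in Q \cap P$, which is exactly the required conclusion.

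The only delicate point is bookkeeping rather than genuine difficulty: the definition demands a single witness $s'$ valid for every tuple $a_1^n$, so one must fix $s_1$ before inspecting the tuple, and one must massage the $n$-ary product so that $s'$ appears in the required slot $g(-, a_i, 1_A^{(n-2)})$. Bundling $s_1$ with $s$ into $s'$ before the argument, exactly as in the proof of Proposition \ref{11}, handles both concerns in one stroke.
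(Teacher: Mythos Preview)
Your proof is correct and follows essentially the same argument as the paper's: pick $t \in P \cap S$ (your $s_1$), form the combined witness $g(t^{(n-1)},s)$, and use associativity to rewrite $g(g(t^{(n-1)},s),a_i,1_A^{(n-2)})=g(t^{(n-1)},g(s,a_i,1_A^{(n-2)}))\in Q\cap P$. If anything, you are slightly more careful than the paper in fixing the witness $s'$ uniformly before inspecting the tuple $a_1^n$, and in explicitly checking $(Q\cap P)\cap S=\varnothing$.
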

\begin{proof}
Let $0 \neq g(a_1^n) \in Q \cap P$ for $a_1^n \in A$. Since $0 \neq g(a_1^n) \in Q $ and $Q$ is a weakly $n$-ary $S$-prime hyperideal of $A$, there exists an $s \in S$ such that $g(s,a_i,1_A^{(n-2)}) \in Q$ for some $i \in \{1,\cdots,n\}$. Take any $t \in P \cap S$. Then we have $g(t^{(n-1)},s) \in S$. Therefore we get the result that $g(g(t^{(n-1)},s),a_i,1_A^{(n-2)})=g(t^{(n-1)},g(s,a_i,1_A^{(n-2)})) \in Q \cap P$ which shows $Q \cap P$ is a weakly $n$-ary $S$-prime hyperideal of $A$.
\end{proof}
\begin{proposition} \label{31}
Let $S \subseteq  A$ be an $n$-ary multiplicative set and $Q$ be   a hyperideal of $A$ with $Q \cap S=\varnothing$. If $(Q : s)$  is a weakly $n$-ary prime hyperideal of $A$  for some $s \in S$, then $Q$ is a weakly $n$-ary $S$-prime hyperideal of $A$.
\end{proposition}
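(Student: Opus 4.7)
The plan is to use the very same element $s \in S$ that makes $(Q:s)$ weakly $n$-ary prime as the witness for $Q$ being weakly $n$-ary $S$-prime. The key structural fact I will rely on is the inclusion $Q \subseteq (Q:s)$: if $a \in Q$ then, since $Q$ is a hyperideal, $g(a,s,1_A^{(n-2)}) \in Q$, so by definition of the residual $a \in (Q:s)$. This inclusion is what lets me transfer information from the weakly $n$-ary prime hyperideal $(Q:s)$ back down to $Q$.

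The argument then proceeds in a few short steps. First, I note that $Q \cap S = \varnothing$ is given by hypothesis, so the hyperideal already satisfies the disjointness requirement in the definition of weakly $n$-ary $S$-prime. Next, I take arbitrary $a_1^n \in A$ with $0 \neq g(a_1^n) \in Q$. By the inclusion above, $0 \neq g(a_1^n) \in (Q:s)$. Invoking the weakly $n$-ary prime property of $(Q:s)$, there exists some index $i \in \{1,\dots,n\}$ with $a_i \in (Q:s)$. Unpacking the definition of the residual, this exactly says $g(s,a_i,1_A^{(n-2)}) \in Q$, which is the conclusion required.

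There is no real obstacle in this proof; the only point meriting care is the preservation of the nonzero condition when passing from $Q$ to $(Q:s)$. Since the hypothesis $0 \neq g(a_1^n)$ is a statement about elements of $A$, and both $Q$ and $(Q:s)$ sit inside $A$, the nonzero condition transfers immediately, which is what lets us legitimately apply the weakly $n$-ary prime definition on $(Q:s)$. Once that is observed, the proposition follows in essentially a single line of symbol manipulation.
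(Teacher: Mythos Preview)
Your proof is correct and follows essentially the same approach as the paper: use the witness $s$, invoke $Q \subseteq (Q:s)$ to push $0 \neq g(a_1^n)$ into $(Q:s)$, apply the weakly $n$-ary prime hypothesis there, and unpack the residual to obtain $g(s,a_i,1_A^{(n-2)}) \in Q$. Your write-up is in fact slightly more detailed than the paper's, since you spell out why the inclusion $Q \subseteq (Q:s)$ holds and why the nonzero condition is preserved.
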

\begin{proof}
 Let $(Q : s)$ be a weakly $n$-ary prime hyperideal of $A$  for some $s \in S$.  Assume that $0 \neq g(a_1^n)\in Q$ for $a_1^n \in A$. Then we have $0 \neq g(a_1^n)\in (Q : s)$ as $Q \subseteq (Q : s)$. Since $(Q : s)$ is a weakly $n$-ary prime hyperideal of $A$, we obtain $a_i \in (Q : s)$ for some $i \in \{1,\cdots,n\}$ which implies that $g(s,a_i,1_A^{(n-2)})\in Q$. Consequently,  $Q$ is a weakly $n$-ary $S$-prime hyperideal of $A$.
\end{proof}
\begin{theorem}\label{13}
Let $S \subseteq  A$ be an $n$-ary multiplicative set. Then every weakly $n$-ary $S$-prime hyperideal of $A$ is $n$-ary prime if and only if $A$ is an  $n$-ary hyperintegral domain and every $S$-prime hyperideal of $A$ is $n$-ary prime.
\end{theorem}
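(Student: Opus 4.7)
The plan is to treat the two implications separately, using the zero hyperideal as the test case in the forward direction and handling the event $g(a_1^n)=0$ as the only genuine obstruction in the backward direction.

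For the forward direction, assume every weakly $n$-ary $S$-prime hyperideal of $A$ is $n$-ary prime. I would first apply this to $Q=\{0\}$. The standing convention $Q\cap S=\varnothing$ gives $0\notin S$, and the defining implication of ``weakly $n$-ary $S$-prime'' is vacuous for $Q=\{0\}$ since its premise $0\neq g(a_1^n)\in\{0\}$ never holds. Thus $\{0\}$ is weakly $n$-ary $S$-prime, so by hypothesis it is $n$-ary prime, and the elementwise characterization of prime hyperideals (Lemma 4.5 of \cite{sorc1}, recalled above) says exactly that $g(a_1^n)=0$ forces some $a_i=0$, i.e., $A$ is an $n$-ary hyperintegral domain. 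For the second clause, I would observe that every $n$-ary $S$-prime hyperideal is a fortiori weakly $n$-ary $S$-prime (the weak version only restricts the premise by $g(a_1^n)\neq 0$), so the assumption forces every $n$-ary $S$-prime hyperideal to be $n$-ary prime.

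For the backward direction, assume $A$ is an $n$-ary hyperintegral domain and every $n$-ary $S$-prime hyperideal of $A$ is $n$-ary prime. Let $Q$ be a weakly $n$-ary $S$-prime hyperideal with witness $s\in S$. My strategy is to promote $Q$ to an $n$-ary $S$-prime hyperideal \emph{using the same witness} $s$ and then apply the second hypothesis. Take any $a_1^n\in A$ with $g(a_1^n)\in Q$. If $g(a_1^n)\neq 0$, the weakly $S$-prime condition directly furnishes an index $i$ with $g(s,a_i,1_A^{(n-2)})\in Q$. If $g(a_1^n)=0$, the hyperintegral-domain hypothesis yields some $a_j=0$, whence $g(s,a_j,1_A^{(n-2)})=0\in Q$ because $0$ is absorbing for $g$. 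In either case $Q$ is $n$-ary $S$-prime with witness $s$, and hence $n$-ary prime by hypothesis.

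The only delicate step is the subcase $g(a_1^n)=0$ in the backward direction: the weakly $S$-prime definition supplies no information there, and Example \ref{madar} shows that this gap genuinely can defeat primeness in general. The integral-domain hypothesis is precisely what is needed to plug this gap, and the test case $Q=\{0\}$ in the forward direction shows it is also necessary, so the two halves of the right-hand condition are exactly the correct extra ingredients.
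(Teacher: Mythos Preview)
Your proof is correct and follows the same route as the paper: in the forward direction the paper likewise tests the hypothesis on $\langle 0\rangle$ to obtain the hyperintegral-domain condition, and in the backward direction it reduces ``weakly $S$-prime'' to ``$S$-prime'' via the integral-domain assumption and then invokes the second hypothesis. Your write-up is in fact more complete than the paper's, which omits the second clause of the forward direction and does not spell out why the hyperintegral-domain hypothesis upgrades weakly $S$-prime to $S$-prime.
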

\begin{proof}
($\Longrightarrow$) Let every weakly $n$-ary $S$-prime hyperideal of $A$ is $n$-ary prime. Since $\langle 0 \rangle$ is a weakly $n$-ary $S$-prime hyperideal of $A$, it is $n$-ary prime which implies $A$ is an  $n$-ary hyperintegral domain. \\

($\Longleftarrow$) Let $A$ be an  $n$-ary hyperintegral domain. This means that every weakly $n$-ary $S$-prime hyperideal of $A$ is $n$-ary $S$-prime and so it is prime by the hypothesis.
\end{proof}
\begin{proposition} \label{1pezeshk}
Let $S \subseteq T \subseteq A$ be two $n$-ary multiplicative sets such that for each $t \in T$, there exists $t^\prime \in T$ with $g(t^{(n-1)},t^\prime) \in S$. If $Q$ is a weakly  $n$-ary $T$-prime hyperideal of $A$, then $Q$ is a weakly $n$-ary $S$-prime hyperideal of $A$.
\end{proposition}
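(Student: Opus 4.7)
The plan is to verify disjointness, pick a witness $t$ from the $T$-prime condition, then use the hypothesis to upgrade it to a witness in $S$, and finally check that the upgraded witness works using the hyperideal property combined with associativity and commutativity of $g$.

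First I would observe that since $S \subseteq T$ and $Q \cap T = \varnothing$, we automatically get $Q \cap S = \varnothing$. Next, because $Q$ is weakly $n$-ary $T$-prime, there exists $t \in T$ such that for every $a_1^n \in A$ with $0 \neq g(a_1^n) \in Q$, we have $g(t, a_i, 1_A^{(n-2)}) \in Q$ for some $i \in \{1,\ldots,n\}$. By hypothesis, there exists $t' \in T$ with $s := g(t^{(n-1)}, t') \in S$. I claim this $s$ witnesses that $Q$ is weakly $n$-ary $S$-prime.

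To verify the claim, suppose $0 \neq g(a_1^n) \in Q$ with $a_1^n \in A$. Applying the $T$-prime property of $t$, fix $i$ with $q := g(t, a_i, 1_A^{(n-2)}) \in Q$. Since $Q$ is a hyperideal, $g(t^{(n-2)}, t', q) \in Q$. Now I would rewrite this element: expanding $q$, the expression
\[
g\bigl(t^{(n-2)}, t', g(t, a_i, 1_A^{(n-2)})\bigr)
\]
involves $2n-1$ entries, and by associativity of the $n$-ary operation $g$ we may rebracket by collecting the first $n$ entries inside, obtaining
\[
g\bigl(g(t^{(n-2)}, t', t), a_i, 1_A^{(n-2)}\bigr) = g\bigl(g(t^{(n-1)}, t'), a_i, 1_A^{(n-2)}\bigr) = g(s, a_i, 1_A^{(n-2)}),
\]
where in the penultimate equality I use commutativity of $g$ to reorder the inner arguments. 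Thus $g(s, a_i, 1_A^{(n-2)}) \in Q$, which establishes that $Q$ is a weakly $n$-ary $S$-prime hyperideal.

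There is no real obstacle beyond being careful with the associativity rebracketing; the key manipulation is the identification $g(t^{(n-2)}, t', g(t, a_i, 1_A^{(n-2)})) = g(s, a_i, 1_A^{(n-2)})$, which is the mechanism by which absorbing one factor of $s = g(t^{(n-1)}, t')$ into the inner hyperideal element gets replaced by the single factor $t$ already produced by the $T$-prime hypothesis.
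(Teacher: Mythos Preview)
Your proof is correct and follows essentially the same approach as the paper: pick the $T$-witness $t$, upgrade to $s=g(t^{(n-1)},t')\in S$ via the hypothesis, and use associativity/commutativity together with the hyperideal absorption to rewrite $g(t^{(n-2)},t',g(t,a_i,1_A^{(n-2)}))$ as $g(s,a_i,1_A^{(n-2)})$. Your write-up is in fact slightly cleaner than the paper's, since you make the quantifier order explicit (fixing $t$ and hence $s$ \emph{before} considering an arbitrary $0\neq g(a_1^n)\in Q$) and you record the disjointness $Q\cap S=\varnothing$, which the paper leaves implicit.
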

\begin{proof}
Assume that $0 \neq g(a_1^n) \in Q$ for some $a_1^n \in A$. Since $Q$ is a weakly $n$-ary $T$-prime hyperideal of $A$,  there exists $t \in T$ such that $g(t,a_i,1^{(n-2)}) \in Q$ for some $i \in \{1,\cdots,n\}$. By the hypothesis, there exists $t^{\prime} \in T$ such that $g(t^{(n-1)},t^\prime) \in S$. Let $s=g(t^{(n-1)},t^{\prime})$. Therefore we get the result that  $g(s,a_i,1_A^{(n-2)})=g(g(t^{(n-1)},t^{\prime}),a_i,1_A^{(n-2)})
=g(g(t^{(n-2)},t^{\prime},1_A),g(t,a_i,1^{(n-2)}),1_A^{(n-2)}) \in Q$, as needed.
\end{proof}

\begin{theorem}
Let $Q$ be a hyperideal of $A$ and $S \subseteq A$ be an $n$-ary multiplicative set such that $Q \cap S=\varnothing$ and $1_A \in S$. Then $Q$ is a weakly $n$-ary $S$-prime hyperideal of $A$ if and only if $Q$ is a weakly $n$-ary $S^{\star}$-prime hyperideal where $S^{\star}=\{a \in A \ \vert \ \frac{a}{1_A} \ \text{is invertible in} \ S^{-1}A \}$.
\end{theorem}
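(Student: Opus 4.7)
The plan is to reduce the theorem to an application of Proposition \ref{1pezeshk}, using that $S^\star$ is the saturation of $S$ in the localization $S^{-1}A$. First I would record three structural facts: (a) $S \subseteq S^\star$, because $\frac{s}{1_A}$ has inverse $\frac{1_A}{s}$ for $s \in S$; (b) $S^\star$ is $n$-ary multiplicative, since units in $S^{-1}A$ are closed under the operation induced by $g$; (c) $Q \cap S^\star = \varnothing$, because if $a \in Q \cap S^\star$ then $\frac{a}{1_A}$ would be a unit of $S^{-1}A$ lying in $S^{-1}Q$, forcing $S^{-1}Q = S^{-1}A$ and thereby yielding an element of $S$ inside $Q$, contradicting $Q \cap S = \varnothing$.

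The direction $S$-prime $\Rightarrow$ $S^\star$-prime is immediate: a witness $s_0 \in S$ for weak $n$-ary $S$-primeness of $Q$ satisfies $s_0 \in S^\star$, the defining implication is literally the same for $S^\star$, and $Q \cap S^\star = \varnothing$ has been verified above.

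For the reverse direction, I would apply Proposition \ref{1pezeshk} with $T = S^\star$. The only nontrivial hypothesis is that for each $t \in S^\star$ there exists $t^\prime \in S^\star$ with $g(t^{(n-1)}, t^\prime) \in S$. Here is my plan. Invertibility of $\frac{t}{1_A}$ supplies $a \in A$ and $s \in S$ with $\frac{t}{1_A} \cdot \frac{a}{s} = \frac{1_A}{1_A}$; unraveling the equivalence relation defining $S^{-1}A$ yields $u \in S$ with
\[
g\bigl(u, g(t, a, 1_A^{(n-2)}), 1_A^{(n-2)}\bigr) \;=\; g(u, s, 1_A^{(n-2)}) \;\in\; S.
\]
Viewing $g$ as the $n$-fold product in the abelian monoid $(A,\cdot)$ with $x \cdot y := g(x, y, 1_A^{(n-2)})$, I would take the $(n-1)$-th $\cdot$-power of both sides, use commutativity to gather the $n-1$ factors of $t$ together, and rewrite the outcome in $n$-ary form as $g(t^{(n-1)}, t^\prime)$ where $t^\prime$ is a $\cdot$-product of powers of $u$ and $a$. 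Since $u \in S \subseteq S^\star$, since the displayed identity also witnesses $a \in S^\star$ (via $a \cdot (u \cdot t) = u \cdot s \in S$), and since $S^\star$ is closed under $\cdot$, we obtain $t^\prime \in S^\star$, completing the verification.

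The main obstacle is the translation step from invertibility of $\frac{t}{1_A}$ in $S^{-1}A$ to an explicit $n$-ary identity in $A$; it requires a clean setup of the localization of a Krasner $(m,n)$-hyperring and of its equivalence relation. Once that is in hand, the remainder reduces to commutative-monoid bookkeeping that relies only on the associativity of $g$ and the assumption $1_A \in S$.
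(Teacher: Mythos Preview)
Your proposal is correct and follows essentially the same route as the paper: the forward direction uses $S \subseteq S^\star$, and the reverse direction is reduced to Proposition \ref{1pezeshk} by unpacking invertibility of $\frac{t}{1_A}$ to obtain an identity $u\cdot t\cdot a \in S$ and then manufacturing $t' \in S^\star$ with $g(t^{(n-1)},t') \in S$. Your construction of $t'$ (via the $(n-1)$-st power) differs cosmetically from the paper's (which builds $t'$ from an $n$-th power of $t\cdot s''$ with $s'' = u\cdot a$), and you are more explicit than the paper in checking that $S^\star$ is $n$-ary multiplicative and that $Q \cap S^\star = \varnothing$, which the paper leaves implicit.
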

\begin{proof}
($\Longrightarrow$) Since $Q$ is a weakly $n$-ary $S$-prime hyperideal of $A$ and $S^{\star}$ is an $n$-ary multiplicative subset of $A$ containing $S$, we are done.\\ 

($\Longleftarrow$) Assume that $t \in S^{\star}$. This means that $\frac{t}{1_A}$ is invertible in $S^{-1}A$ and so there exists $x \in A$ and $s \in S$ such that $G(\frac{t}{1_A},\frac{x}{s},\frac{1_A}{1_A}^{(n-2)})=\frac{g(t,x,1_A^{(n-2)})}{g(s,1_A^{(n-1)})}=\frac{1_A}{1_A}$.  Then there exists $s^{\prime} \in S$ such that $0 \in g(s^{\prime}, f(g(t,x,1_A^{(n-2)}),-g(s,1_A^{(n-1)}),0^{(m-2)}),1_A^{(n-2)})=f(g(s^{\prime}, t,x,1_A^{(n-3)}),-g(s^{\prime},s,1_A^{(n-2)}),0^{(m-2)})$. 
Since  $g(s^{\prime},s,1_A^{(n-2)}) \in S$, we obtain $g(s^{\prime}, t,x,1_A^{(n-3)}) \in f(g(s^{\prime},s,1_A^{(n-2)}),0^{(m-1)}) \subseteq S$.
Let  $s^{\prime \prime }=g(s^{\prime},x,1_A^{(n-2)})$. Since $G(\frac{g(s^{\prime},x,1_A^{(n-2)})}{1_A},\frac{g(t,1_A^{(n-1)})}{g(s^{\prime},x,t,1_A^{(n-3)})},\frac{1_A}{1_A}^{(n-2)})=\frac{g(s^{\prime},x,t,1_A^{(n-3)})}{g(s^{\prime},x,t,1_A^{(n-3)})}=\frac{1_A}{1_A}$, we obtain $s^{\prime \prime} \in S^{\star}$.  Hence we have  $g(t^{(n-1)},g({s^{\prime \prime}}^{(n-1)},g(t,s^{\prime \prime},1_A^{(n-2)})))=g(g(t,s^{\prime \prime},1_A^{(n-2)})^n) \in S$. Put  $t^{\prime}=g({s^{\prime \prime}}^{(n-1)},g(t,s^{\prime \prime},1_A^{(n-2)}))$. So $t^{\prime} \in S^{\star}$. Since $g(t^{(n-1)},t^{\prime}) \in S$, we conclude that $Q$ is a weakly $n$-ary $S$-prime hyperideal of $A$ by Proposition \ref{1pezeshk}.
\end{proof}

Recall from \cite{davvazz} that a hyperideal $Q$ of $A$  is called strongly weakly $n$-ary prime  if $0 \neq g(Q_1^n) \subseteq Q$ for all hyperideals $Q_1^n$ of $A$  implies that $Q_i \subseteq Q$ for some $i \in \{1,\cdots,n\}$. Assume that  $S \subseteq A$ is an $n$-ary multiplicative set  satisfying  $Q \cap S= \varnothing$.  A hyperideal $Q$ of $A$  refers to a strongly weakly $n$-ary $S$-prime hyperideal if there exists an $s \in S$ such that for each hyperideal  $Q_1^n $ of $ A$ if $0 \neq g(Q_1^n) \subseteq Q$, we have $g(s,Q_i,1_A^{(n-2)}) \subseteq Q$ for some $i \in \{1,\cdots,n\}$.  In this case, it is said that $Q$ is associated to $s$. It is clear that every strongly weakly $n$-ary $S$-prime hyperideal of $A$  is a weakly $n$-ary $S$-prime hyperideal.
\begin{theorem} \label{jalili} 
Assume that  $Q$ is a strongly weakly $n$-ary $S$-prime hyperideal of $A$ such that $Q$ is associated to $s$.  If $g(a_1^n)=0$ for $a_1^n \in A$ but $g(s,a_i,1_A^{(n-2)}) \notin Q$  for all $i \in \{1,\cdots,n\}$, then $g(a_1,\cdots,\widehat{a_{k_1}}, \cdots,\widehat{a_{k_2}},\cdots, \widehat{a_{k_u}}, \cdots, Q^{(v)})=0$ for each $k_1,\cdots,k_v \in \{1,\cdots,n\}$. 
\end{theorem}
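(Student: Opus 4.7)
The plan is to induct on $v$, the number of $a_{k_j}$'s one replaces by elements of $Q$. At each stage I sandwich a hypothetical nonzero substituted product between $\{0\}$ and $Q$ by forming a product of hyperideals in which each ``replacement slot'' is the hyperideal generated jointly by $a_{k_j}$ and its proposed $Q$-replacement; the strongly weakly $n$-ary $S$-prime property, evaluated at the distinguished associated $s$, then pulls out an index $i$ with $g(s,a_i,1_A^{(n-2)})\in Q$, contradicting the hypothesis on the $a_j$'s.

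For the base case $v=1$, suppose toward contradiction that some $q\in Q$ gives $g(a_1,\ldots,q,\ldots,a_n)\neq 0$ with $q$ in slot $k_1$, and let $B$ denote the hyperideal generated by $a_{k_1}$ and $q$. Consider
\[P=g\bigl(\langle a_1\rangle,\ldots,\langle a_{k_1-1}\rangle,B,\langle a_{k_1+1}\rangle,\ldots,\langle a_n\rangle\bigr).\]
Distributivity of $g$ over $f$ expresses $P$ as an $f$-combination of $g(\langle a_1\rangle,\ldots,\langle a_n\rangle)\subseteq\langle g(a_1^n)\rangle=\{0\}$ and $g(\langle a_1\rangle,\ldots,\langle q\rangle,\ldots,\langle a_n\rangle)\subseteq Q$, and the latter contains the nonzero element $g(a_1,\ldots,q,\ldots,a_n)$. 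Hence $P\subseteq Q$ and $P\neq\{0\}$, so the strongly weakly $S$-prime property yields either $g(s,\langle a_i\rangle,1_A^{(n-2)})\subseteq Q$ for some $i\neq k_1$, or $g(s,B,1_A^{(n-2)})\subseteq Q$; in either case some $g(s,a_j,1_A^{(n-2)})$ lands in $Q$, a contradiction.

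For the inductive step $v>1$, assume the conclusion for every $v'<v$. If $q_1,\ldots,q_v\in Q$ satisfy $g(a_1,\ldots,q_1,\ldots,q_v,\ldots,a_n)\neq 0$ (with $q_j$ in slot $k_j$), let $B_{k_j}$ be the hyperideal generated by $\{a_{k_j},q_j\}$ and set $P=g(\langle a_1\rangle,\ldots,B_{k_1},\ldots,B_{k_v},\ldots,\langle a_n\rangle)$. Expanding $P$ distributively yields an $f$-combination of $2^v$ subproducts indexed by $T\subseteq\{1,\ldots,v\}$: slot $k_j$ is $\langle a_{k_j}\rangle$ when $j\notin T$ and $\langle q_j\rangle$ when $j\in T$. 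Each such subproduct $g(\langle c_1\rangle,\ldots,\langle c_n\rangle)$ lies in $\langle g(c_1,\ldots,c_n)\rangle$ by commutativity and associativity of $g$. Thus the $T=\varnothing$ term is $\{0\}$ (because $g(a_1^n)=0$), each term with $0<|T|<v$ is $\{0\}$ by the inductive hypothesis, and the $T=\{1,\ldots,v\}$ term contains the nonzero $g(a_1,\ldots,q_1,\ldots,q_v,\ldots,a_n)$. Consequently $P\subseteq Q$ and $P\neq\{0\}$, so strongly weakly $S$-primeness again produces some $i$ with $g(s,a_i,1_A^{(n-2)})\in Q$---either from an $\langle a_i\rangle$-slot with $i\notin\{k_1,\ldots,k_v\}$ or, via $a_{k_j}\in B_{k_j}$, from some $B_{k_j}$-slot---again contradicting the hypothesis.

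The main obstacle I expect is the clean $2^v$-fold distributive expansion together with the reduction of each mixed subproduct of principal hyperideals to a single principal hyperideal $\langle g(c_1,\ldots,c_n)\rangle$, so that the inductive hypothesis genuinely applies to the intermediate terms. This requires repeated commutativity and associativity of $g$ in the $(m,n)$-hyperring setting and is notation-heavy; one must also rule out any ``cancellation'' of the nonzero all-$q$ contribution against the vanishing subproducts under the $m$-ary hyperaddition $f$, which is handled by commutativity of $(A,f)$ together with $0$ being a scalar neutral element of $f$.
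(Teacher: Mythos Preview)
Your proof is correct and follows the same inductive strategy as the paper: assume a nonzero substituted product exists, combine each $a_{k_j}$ with its $Q$-replacement $q_j$ in the corresponding slot, expand distributively so that all proper subterms vanish by the inductive hypothesis while the full-replacement term survives inside $Q$, and derive the contradiction $g(s,a_i,1_A^{(n-2)})\in Q$. The only cosmetic difference is that you package the slots as hyperideals $B_{k_j}$ and invoke the strongly weakly $S$-prime property directly on the product of hyperideals, whereas the paper works at the element level with the hypersums $f(a_{k_j},x_{k_j},0^{(m-2)})$ and uses the element-wise weakly $S$-prime property (which follows from the strongly weakly one).
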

\begin{proof}
We use the induction on $v$. Let $v=1$. Assume that  $g(a_1^{i-1},Q,a_{i+1}^n) \neq 0$ for some $i \in \{1,\cdots,n\}$. Therefore  we get $0 \neq g(a_1^{i-1},a,a_{i+1}^n) \in Q$ for some $a \in Q$. So we conclude that $0 \neq g(a_1^{i-1},a,a_{i+1}^n)=f(g(a_1^n),g(a_1^{i-1},a,a_{i+1}^n),0^{(m-2)})=g(a_1^{i-1},f(a,a_i,0^{(m-2)}),a_{i+1}^n) \subseteq Q$. Then we get  $g(s,f(a,a_i,0^{(m-2)}),1_A^{(n-2)})=f(g(s,a,1_A^{(n-2)}),g(s,a_i,1_A^{(n-2)}),0^{(m-2)}) \subseteq Q$ which means $g(s,a_i,1_A^{(n-2)}) \in Q$ or $g(s,a_j,1_A^{(n-2)}) \in Q$ for some $j \in \{1,\cdots,\widehat{i},\cdots, n\}$. It follows that $g(s,a_i,1_A^{(n-2)}) \in Q$  for some $i \in \{1,\cdots,n\}$ which is impossible. Now, suppose that the
claim is true for all positive integers which are  less than $v$. Suppose on the contrary that  $g(a_1,\cdots,\widehat{a_{k_1}}, \cdots,\widehat{a_{k_2}},\cdots, \widehat{a_{k_v}}, \cdots, Q^{(v)}) \neq 0$ for some  $k_1,\cdots,k_v \in \{1,\cdots,n\}$.  Without loss of generality, we eliminate $a_1^v$. So we have $g(a_{v+1},\cdots,a_n,Q^{(v)}) \neq 0$. Then there exist $x_1^v \in Q$ such that $0 \neq g(a_{v+1},\cdots,a_n,x_1^v) \in Q$. By induction hypothesis, we conclude that  $0 \neq g(f(a_1,x_1,0^{(m-2)}),\cdots,f(a_v,x_v,0^{(m-2)}),a_{v+1}^n) \subseteq Q$. By the hypothesis, we get  $g(s,f(a_i,x_i,0^{(m-2)}),1_A^{(n-2)}) \in Q$    for some $i \in \{1,\cdots,v\}$ or    $g(s,a_j,1_A^{(n-2)}) \in Q$ for some $j \in \{v+1,\cdots,n\}$. This implies that $g(s,a_i,1_A^{(n-2)}) \in Q$  for some $i \in \{1,\cdots,n\}$ which is a contradiction. Hence  we conclude that $g(a_1,\cdots,\widehat{a_{k_1}}, \cdots,\widehat{a_{k_2}},\cdots, \widehat{a_{k_u}}, \cdots, Q^{(v)})=0$ for each $k_1,\cdots,k_u \in \{1,\cdots,n\}$. 
\end{proof}
\begin{theorem} \label{14}
Let $S \subseteq A$ be an $n$-ary multiplicative set. If $Q$ is a strongly weakly $n$-ary $S$-prime hyperideal of $A$ that is not $n$-ary $S$-prime, then $g(Q^{(n)})=0$. 
\end{theorem}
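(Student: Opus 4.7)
The plan is to exploit the contrast between the strongly weakly $S$-prime property (which lets us pick a witness $s \in S$) and the failure of the $S$-prime property (which forces that same $s$ to be defeated by some tuple). First, I would fix an element $s \in S$ to which $Q$ is associated as a strongly weakly $n$-ary $S$-prime hyperideal. Since $Q$ is, by hypothesis, not $n$-ary $S$-prime, no element of $S$ can serve as an $S$-prime witness; in particular this $s$ fails, so there must exist $a_1^n \in A$ with $g(a_1^n) \in Q$ and $g(s,a_i,1_A^{(n-2)}) \notin Q$ for every $i \in \{1,\dots,n\}$.

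Next I would argue that $g(a_1^n)$ must in fact be $0$. Indeed, if $g(a_1^n) \neq 0$, then by applying the strongly weakly $n$-ary $S$-prime condition to the principal hyperideals $\langle a_i\rangle$ (which yield $0 \neq g(\langle a_1\rangle,\dots,\langle a_n\rangle) \subseteq Q$ because $g(a_1^n) \in Q$), we would get $g(s,a_i,1_A^{(n-2)}) \in Q$ for some $i$, contradicting the choice of the tuple.

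Having secured a tuple $a_1^n$ with $g(a_1^n)=0$ and no $g(s,a_i,1_A^{(n-2)})$ lying in $Q$, I would invoke Theorem \ref{jalili} directly. That theorem yields
\[
g\bigl(a_1,\dots,\widehat{a_{k_1}},\dots,\widehat{a_{k_v}},\dots,Q^{(v)}\bigr)=0
\]
for every choice of indices $k_1,\dots,k_v \in \{1,\dots,n\}$. Specializing to $v=n$ (so that every $a_j$ is eliminated and replaced by a factor drawn from $Q$) gives exactly $g(Q^{(n)})=0$, which is the desired conclusion.

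The step I expect to require the most care is the second one, turning the failure of $S$-primeness into an actionable statement about the specific witness $s$ supplied by the strongly weakly hypothesis, and then lifting the element-wise failure $g(a_1^n)\in Q$ with $g(s,a_i,1_A^{(n-2)})\notin Q$ to the hyperideal-level condition $0\neq g(\langle a_1\rangle,\dots,\langle a_n\rangle)\subseteq Q$ needed to trigger the strongly weakly property. Once that bridge between the two definitions is in place, the rest is a direct application of the previous theorem.
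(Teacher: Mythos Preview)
Your argument is correct. The paper's proof proceeds by contradiction: assuming $g(Q^{(n)}) \neq 0$, it shows that $Q$ must be $n$-ary $S$-prime by taking an arbitrary $g(a_1^n) \in Q$ and, in the delicate case $g(a_1^n) = 0$, picking $x_1^n \in Q$ with $g(x_1^n) \neq 0$ and using Theorem~\ref{jalili} (for $v<n$) to see that $g(f(a_1,x_1,0^{(m-2)}),\dots,f(a_n,x_n,0^{(m-2)}))$ is nonzero and contained in $Q$, whence the strongly weakly hypothesis forces some $g(s,a_i,1_A^{(n-2)}) \in Q$. In other words, the paper essentially carries out the $v=n$ inductive step of Theorem~\ref{jalili} by hand inside the proof of Theorem~\ref{14}. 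Your route is the direct one: once you have produced a tuple $a_1^n$ with $g(a_1^n)=0$ and $g(s,a_i,1_A^{(n-2)}) \notin Q$ for all $i$, invoking Theorem~\ref{jalili} at $v=n$ immediately yields $g(Q^{(n)})=0$. The content is the same, but your packaging makes clearer that Theorem~\ref{14} is just the extremal case of Theorem~\ref{jalili}. Your caution about the bridge step is warranted but easily resolved: in a commutative Krasner $(m,n)$-hyperring one has $g(\langle a_1\rangle,\dots,\langle a_n\rangle)=\langle g(a_1^n)\rangle \subseteq Q$, so the strongly weakly hypothesis applied to the principal hyperideals $\langle a_i\rangle$ gives $g(s,a_i,1_A^{(n-2)}) \in Q$ for some $i$ whenever $g(a_1^n) \neq 0$, exactly as you outlined.
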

\begin{proof}
Let $Q$ be a strongly weakly $n$-ary $S$-prime hyperideal of $A$ and $Q$ is associated to $s$. Suppose on the contrary that $0 \neq g(Q^{(n)})$.  We show that $Q$ is an $n$-ary $S$-prime hyperideal of $A$. Let $g(a_1^n) \in Q$ for $a_1^n \in A$. If $0 \neq g(a_1^n) \in Q$, then we have $g(s,a_i,1_A^{(n-2)}) \in Q$ for some $i \in \{1,\cdots,n\}$. Assume that $g(a_1^n)=0$.  From $ 0 \neq g(Q^{(n)})$, it follows that there exist $x_1^n \in Q$ such that $g(x_1^n) \neq 0$. By Theorem \ref{jalili},  we get the result that $0 \neq g(f(a_1,x_1,0^{(m-2)}), \cdots,f(a_n,x_n,0^{(m-2})) \subseteq Q$. By the hypothesis, we get $f(g(s,a_i,1_A^{(n-2)}),g(s,x_i,1_A^{(n-2)}),0^{(m-2)})=g(s,f(a_i,x_i,0^{(m-2)}) \subseteq Q$ for some $i \in \{1,\cdots,n\}$ which means $g(s,a_i,1_A^{(n-2)}) \in Q$ as $g(s,x_i,1_A^{(n-2)}) \in Q$. Thus $Q$ is an $n$-ary $S$-prime hyperideal of $A$, a contradiction. Consequently, $g(Q^{(n)})=0$.
\end{proof}
In view of the previous theorem   and Theorem 3.9 in \cite{mah6}, we have the following result.
\begin{corollary}\label{15}
Let $S \subseteq A$ be an $n$-ary multiplicative set and $Q$ be a strongly weakly $n$-ary $S$-prime hyperideal of $A$. Then $Q \subseteq rad(0)$ or $g(s,rad(0),1^{(n-2)}) \subseteq Q$ for some $s \in S$. 
\end{corollary}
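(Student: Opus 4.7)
The plan is to split on whether $Q$ is actually $n$-ary $S$-prime or only weakly so, and handle each case with one of the two cited results.

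In the first case, suppose $Q$ is $n$-ary $S$-prime. Then by Theorem 3.9 of \cite{mah6}, there exists $s \in S$ with $g(s, rad(Q), 1_A^{(n-2)}) \subseteq Q$ (this is the standard $S$-prime analogue of ``radical times $s$ lies in $Q$''). Since $\langle 0 \rangle \subseteq Q$, taking radicals gives $rad(0) \subseteq rad(Q)$, and therefore
\[
g(s, rad(0), 1_A^{(n-2)}) \subseteq g(s, rad(Q), 1_A^{(n-2)}) \subseteq Q,
\]
which is the second alternative of the conclusion.

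In the second case, suppose $Q$ is not $n$-ary $S$-prime. Then Theorem \ref{14} applies and yields $g(Q^{(n)}) = 0$. In particular, for any $q \in Q$, we have $g(q^{(n)}) = 0 \in \langle 0 \rangle$, so by the characterization of $rad(Q)$ recalled right after the definition of the radical (specifically, the fact from \cite{sorc1} that $g(x^{(u)}, 1_A^{(n-u)}) \in Q$ for some $u \le n$ suffices for $x \in rad(Q)$, applied with $Q = \langle 0 \rangle$ and $u = n$), we conclude $q \in rad(0)$. Thus $Q \subseteq rad(0)$, which is the first alternative.

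The two cases being exhaustive completes the argument. The only subtle point, and the place I expect to need the most care, is to justify the inclusion $rad(0) \subseteq rad(Q)$ cleanly and to invoke the cited Theorem 3.9 of \cite{mah6} with the correct formulation (a statement about $rad(Q)$ rather than $rad(0)$); once that is in hand, the rest is a direct dichotomy argument using Theorem \ref{14} for the non-$S$-prime branch.
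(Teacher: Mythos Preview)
Your proposal is correct and matches the paper's approach: the paper gives only the one-line hint ``In view of the previous theorem and Theorem 3.9 in \cite{mah6}'', and you have correctly unpacked this into the dichotomy on whether $Q$ is $n$-ary $S$-prime (invoke Theorem 3.9 of \cite{mah6}) or not (invoke Theorem \ref{14} to get $g(Q^{(n)})=0$, hence $Q\subseteq rad(0)$). One small citation slip: the passage after the definition of the radical states the implication $x\in rad(Q)\Rightarrow$ power condition, whereas you need the converse; fortunately that converse is the trivial direction, since $g(q^{(n)})=0\in P$ for any prime hyperideal $P$ forces $q\in P$ by Lemma~4.5 of \cite{sorc1}, so $q\in\bigcap P = rad(0)$.
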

The following result is a version of Theorem 4.4 in \cite{davvazz} where $k=1$. 
\begin{corollary}\label{16}
Let $Q$ be  a strongly weakly $n$-ary prime hyperideal of $A$ but is not $n$-ary prime hyperideal. Then $g(Q^{(n)})=0$.
\end{corollary}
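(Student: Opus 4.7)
The plan is to obtain this as a direct specialization of Theorem \ref{14}. The key observation is that when the multiplicative set is chosen to be the trivial one $S = \{1_A\}$, the notions of ``strongly weakly $n$-ary $S$-prime'' and ``$n$-ary $S$-prime'' collapse to ``strongly weakly $n$-ary prime'' and ``$n$-ary prime'' respectively, because $g(1_A, a, 1_A^{(n-2)}) = a$ for every $a \in A$.

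First I would verify that $S = \{1_A\}$ is an admissible $n$-ary multiplicative subset of $A$ in the sense required by Theorem \ref{14}: the singleton $\{1_A\}$ is closed under $g$ since $g(1_A^{(n)}) = 1_A$, and $Q \cap S = \varnothing$ because $Q$ is a proper hyperideal (so $1_A \notin Q$). Second, I would unwind the definitions to confirm that the hypothesis of Corollary \ref{16} matches the hypothesis of Theorem \ref{14} for this choice of $S$: the statement ``$Q$ is strongly weakly $n$-ary prime'' literally matches the $s = 1_A$ version of strongly weakly $n$-ary $S$-prime, and ``$Q$ is not $n$-ary prime'' similarly matches ``$Q$ is not $n$-ary $\{1_A\}$-prime''.

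With those identifications in place, Theorem \ref{14} applied to $S = \{1_A\}$ delivers $g(Q^{(n)}) = 0$ immediately, which is exactly the conclusion of the corollary.

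I do not expect any real obstacle here; the whole content of the corollary is the observation that the $S$-prime theory specializes to the classical theory when $S$ is trivial. The only thing to be careful about is to ensure that the scalar identity $1_A$ genuinely acts as an identity under $g$ in the statements involved (which is implicit in the standing assumption that $A$ is a commutative Krasner $(m,n)$-hyperring with scalar identity, already used throughout the preceding definitions).
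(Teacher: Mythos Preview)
Your proposal is correct and matches the paper's own proof exactly: the paper simply writes ``By taking $S=\{1\}$ in Theorem \ref{14}, we are done.'' Your additional verification that $\{1_A\}$ is an $n$-ary multiplicative set disjoint from the proper hyperideal $Q$, and that the $S$-prime notions collapse to the classical ones via $g(1_A,a,1_A^{(n-2)})=a$, is accurate and merely spells out what the paper leaves implicit.
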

\begin{proof}
By taking $S=\{1\}$ in Theorem \ref{14}, we are done.
\end{proof}
Suppose that $M $ is a non-empty set and $(A,f,g)$ is  a commutative Krasner $(m,n)$-hyperring .  If  $(M, f^{\prime})$ is an $m$-ary hypergroup and the map 
\[g^{\prime}:\underbrace{A \times ... \times A}_{n-1} \times M\longrightarrow 
P^*(M)\]
statisfied the following conditions:
\begin{itemize} 
\item[\rm{(1)}]~ 
$g^{\prime}(x_1^{n-1},f^{\prime}(a_1^m))=f^{\prime}(g^{\prime}(x_1^{n-1}
,a_1),...,g^{\prime}(a_1^{n-1}
,a_m))$
\item[\rm{(2)}]~ $g^{\prime}(x_1^{i-1},f(y_1^m),x_{i+1}^{n-1},a)=f^{\prime}(g^{\prime}(x_1^{i-1}
,y_1,x_{i+1}^{n-1},a),...,g^{\prime}(x_1^{i-1}
y_m,x_{i+1}^{n-1},a))$
\item[\rm{(3)}]~ $g^{\prime}(x_1^{i-1},g(x_i^{i+n-1}),x_{i+m}^{n+m-2},a)=
g^{\prime}(x_1^{n-1},g^{\prime}(x_m^{n+m-2},a))$
\item[\rm{(4)}]~$ 0=g^{\prime}(x_1^{i-1},0,x_{i+1}^{n-1},a)$,
\end{itemize} 
then the triple $(M, f^{\prime}, g^{\prime})$ is called an $(m, n)$-hypermodule over $(A,f,g)$ \cite{Anvariyeh}. In following theorem, Nakayama$^,$s Lemma is applied to a hyperideal that is strongly weakly $n$-ary $S$-prime.
\begin{theorem} \label{naka}
Let $(M,f^{\prime},g^{\prime})$ be an $(m,n)$-hypermodule over $(A,f,g)$ and $Q$ be a strongly weakly $n$-ary $S$-prime hyperideal of $A$ that is not $n$-ary $S$-prime. If $M=g^{\prime}(Q,1_A^{(n-2)},M)$, then $M=\{0\}$. 
\end{theorem}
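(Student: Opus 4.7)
The plan is to combine Theorem \ref{14} with an iteration of the hypothesis $M = g^{\prime}(Q, 1_A^{(n-2)}, M)$, and then use the hypermodule associativity together with the neutrality of $1_A$ to absorb the $n$ accumulated $Q$-factors into a single $g$-product.

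First, since $Q$ is a strongly weakly $n$-ary $S$-prime hyperideal of $A$ that is not $n$-ary $S$-prime, Theorem \ref{14} gives $g(Q^{(n)}) = 0$. Next, starting from the hypothesis $M = g^{\prime}(Q, 1_A^{(n-2)}, M)$ and substituting $M$ into itself on the right, I obtain
\[
M \;=\; g^{\prime}\bigl(Q,1_A^{(n-2)},g^{\prime}(Q,1_A^{(n-2)},M)\bigr).
\]
Iterating this substitution a total of $n-1$ more times produces an $n$-fold nested expression in which $n$ copies of $Q$ appear interleaved with slots of $1_A$.

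Then I would apply the associativity axiom (3) of the $(m,n)$-hypermodule repeatedly to move the nested $g^{\prime}$-actions onto the $A$-side as applications of $g$; because the padding entries are all $1_A$ and $1_A$ is neutral for $g$, each merging step simply concatenates the accumulated $Q$-factors. After all the inner $g^{\prime}$'s have been pushed inside, the $A$-side contribution becomes an element of $g(Q^{(n)})$. Invoking Theorem \ref{14}, this contribution equals $0$, so by axiom (4) of the hypermodule,
\[
M \;\subseteq\; g^{\prime}\bigl(0,1_A^{(n-2)},M\bigr) \;=\; \{0\},
\]
and therefore $M = \{0\}$.

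The main obstacle is the bookkeeping in the iterated associativity step: one must repeatedly reindex the nested $g^{\prime}$'s so that axiom (3) applies with the correct positions, and then use $g(1_A^{(n)}) = 1_A$ and $g(q,1_A^{(n-1)}) = q$ to discard the trivial entries so that only the clean product $g(q_1,\ldots,q_n) \in g(Q^{(n)})$ remains. A secondary care-point is that the hypothesis is an equality of sets of hyperproducts, so the iteration must be read as element-wise containment, which is legitimate because $g^{\prime}$ is monotone in each argument (a consequence of the distributivity axioms (1) and (2)).
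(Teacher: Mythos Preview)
Your proposal is correct and follows essentially the same route as the paper: invoke Theorem~\ref{14} to obtain $g(Q^{(n)})=0$, iterate the hypothesis $M=g^{\prime}(Q,1_A^{(n-2)},M)$ to accumulate $n$ copies of $Q$, and use the hypermodule associativity axiom to collapse the nested actions into $g^{\prime}(g(Q^{(n)}),1_A^{(n-2)},M)=\{0\}$. The only cosmetic difference is direction: the paper begins with $g^{\prime}(g(Q^{(n)}),1_A^{(n-2)},M)$ and unfolds it step by step (peeling off one $Q$ at a time and replacing $g^{\prime}(Q,1_A^{(n-2)},M)$ by $M$) until reaching $M$, whereas you start from $M$ and fold the iterates inward; the chain of equalities is the same read in reverse.
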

\begin{proof}
Assume that  $Q$ is  a strongly weakly $n$-ary $S$-prime hyperideal of $A$ but is not $n$-ary $S$-prime and $M=g^{\prime}(Q,1_A^{(n-2)},M)$. By Theorem \ref{14}, we get the result that $g^{\prime}(g(Q^{(n)}),1_A^{(n-2)},M)=\{0\}$. Moreover, we have

 $\hspace{1cm}g^{\prime}(g(Q^{(n)}),1_A^{(n-2)},M)=g^{\prime}(g(Q^{(n-1)},1_A),1_A^{(n-2)},g^{\prime}(Q,1_A^{(n-2)},M))$
 
 $\hspace{4.4cm}=g^{\prime}(g(Q^{(n-1)},1_A),1_A^{(n-2)},M)$
 
 $\hspace{4.4cm}=g^{\prime}(g(Q^{(n-2)},1_A^{(2)}),1_A^{(n-2)},g^{\prime}(Q,1_A^{(n-2)},M))$
 
 $\hspace{4.4cm}=g^{\prime}(g(Q^{(n-2)},1_A^{(2)}),1_A^{(n-2)},M)$
 
 $\hspace{4.4cm}=\cdots$
 
 $\hspace{4.4cm}=g^{\prime}(Q,1_A^{(n-2)},g^{\prime}(Q,1_A^{(n-2)},M))$
 
 $\hspace{4.4cm}=g(Q,1_A^{(n-2)},M)$
 
 $\hspace{4.4cm}=M$. \\Then we conclude that  $M=\{0\}$. 
\end{proof}
\begin{theorem}\label{17}
Let $Q$ be a hyperideal of $A$ and $S \subseteq A$ is an $n$-ary multiplicative set  with $Q \cap S=\varnothing$. Then $Q$ is a strongly weakly $n$-ary $S$-prime hyperideal of $A$ if and only if 
there exists an element $s \in S$ such that for every $a \notin (Q : s)$, either $(Q : a) \subseteq (Q : s)$ or $(Q :a) =(0 : a)$.
\end{theorem}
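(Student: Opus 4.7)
The plan is to prove the two directions separately.

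\emph{Forward direction.} Assume $Q$ is a strongly weakly $n$-ary $S$-prime hyperideal associated to $s \in S$, and fix $a \in A$ with $a \notin (Q:s)$. The strategy is to test the strong hypothesis on the three-term product of hyperideals $J := g(\langle a \rangle, (Q:a), \langle 1_A \rangle^{(n-2)})$. Using the definition of $(Q:a)$ together with the associativity and distributivity of $g$, one verifies $J \subseteq Q$. If $J = \{0\}$, then taking the representative $a \in \langle a \rangle$ and $1_A$ in the outer slots gives $g(a,b,1_A^{(n-2)}) = 0$ for every $b \in (Q:a)$; combined with the trivial inclusion $(0:a) \subseteq (Q:a)$, this yields $(Q:a) = (0:a)$. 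Otherwise $0 \neq J \subseteq Q$, and the strong property forces one of the absorptions $g(s,\langle a\rangle,1_A^{(n-2)})\subseteq Q$, $g(s,(Q:a),1_A^{(n-2)})\subseteq Q$, or $g(s,\langle 1_A\rangle,1_A^{(n-2)})\subseteq Q$. The first would place $a \in (Q:s)$, contrary to our choice, and the third would put $s \in Q$, contradicting $Q \cap S = \varnothing$. Thus the middle alternative holds, which is equivalent to $(Q:a) \subseteq (Q:s)$.

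\emph{Backward direction.} Given the element-wise characterization, I would argue by contradiction. Let $Q_1,\ldots,Q_n$ be hyperideals with $0 \neq g(Q_1^n) \subseteq Q$, and suppose no $Q_j$ is contained in $(Q:s)$. Choose $a_j \in Q_j \setminus (Q:s)$ for each $j$, and witnesses $c_j \in Q_j$ with $g(c_1^n) \neq 0$. Set $\alpha = g(a_1,\ldots,a_{n-1},1_A)$; the inclusion $g(a_1,\ldots,a_{n-1},b)\in Q$ for every $b \in Q_n$ gives $Q_n \subseteq (Q:\alpha)$. Provided $\alpha \notin (Q:s)$, the characterization splits into either $(Q:\alpha) \subseteq (Q:s)$, which forces $Q_n \subseteq (Q:s)$ and hence a contradiction, or $(Q:\alpha) = (0:\alpha)$, which yields $g(a_1,\ldots,a_{n-1},Q_n) = \{0\}$. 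One then descends over indices $n{-}1, n{-}2,\ldots$, each time peeling an $a_i$ and producing analogous elements $\alpha_i = g(a_1,\ldots,a_{i-1},1_A,b_{i+1},\ldots,b_n)$ with $b_j \in Q_j$, and concluding by the same dichotomy. After $n$ iterations the accumulated vanishing, together with distributivity of $g$ over $f$ and the scalar-identity property of $0$ in the canonical $m$-ary hypergroup, lets one substitute each $a_i$ by the corresponding witness $c_i$ without changing the value of the product, giving $g(c_1^n) = 0$ and contradicting the choice of $c_1,\ldots,c_n$. The residual subcase $\alpha_i \in (Q:s)$ at some step is handled by a parallel iteration in which the extra $s$-factor is absorbed together with one $a_i$ per step.

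The forward direction is clean once the three-term product $g(\langle a\rangle,(Q:a),\langle 1_A\rangle^{(n-2)})$ is identified as the correct test object, so the main obstacle is the backward direction: carefully interleaving the two alternatives ``$(Q:\alpha_i)\subseteq (Q:s)$'' versus ``$(Q:\alpha_i)=(0:\alpha_i)$'' across all $n$ indices, and exploiting distributivity of $g$ over $f$ to transfer the partial zero information from the chosen $a_i$'s to arbitrary elements of the $Q_i$'s so that the non-triviality hypothesis $g(Q_1^n)\neq 0$ produces the final contradiction.
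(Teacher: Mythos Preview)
Your forward direction is correct and in fact cleaner than the paper's. The paper does not invoke the strong (hyperideal) hypothesis at all in this half; it uses only the element-wise weakly $S$-prime consequence. Given $a\notin(Q:s)$ with $(Q:a)\neq(0:a)$, the paper picks a witness $x\in(Q:a)$ with $g(x,a,1_A^{(n-2)})\neq 0$, deduces $g(s,x,1_A^{(n-2)})\in Q$, and then for an arbitrary $b\in(Q:a)$ treats the two sub-cases $g(a,b,1_A^{(n-2)})\neq 0$ and $g(a,b,1_A^{(n-2)})=0$ separately, in the latter shifting by $x$ via $g(a,f(x,b,0^{(m-2)}),1_A^{(n-2)})$. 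Your single application of the strong hypothesis to $g(\langle a\rangle,(Q:a),A^{(n-2)})$ bypasses this case split entirely; that is a genuine simplification.

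Your backward direction, however, has a real gap. The characterization you want to exploit applies only to elements \emph{outside} $(Q:s)$, and you have no control over whether the product $\alpha=g(a_1,\dots,a_{n-1},1_A)$ lies outside $(Q:s)$: knowing $a_j\notin(Q:s)$ for each $j$ does not prevent $\alpha\in(Q:s)$. You flag this (``the residual subcase $\alpha_i\in(Q:s)$''), but ``a parallel iteration in which the extra $s$-factor is absorbed together with one $a_i$ per step'' is not an argument, and I do not see how to turn it into one. The same obstruction recurs at every stage of your descent, since each $\alpha_i=g(a_1,\dots,a_{i-1},1_A,b_{i+1},\dots,b_n)$ depends on a running choice of $b_j\in Q_j$ and may or may not land in $(Q:s)$. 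The paper avoids this entirely by applying the dichotomy only to the \emph{individual} elements $a_i\in Q_i\setminus(Q:s)$, for which non-membership in $(Q:s)$ is built in: from $g(Q_1^{i-1},1_A,Q_{i+1}^n)\subseteq(Q:a_i)$ it forces the alternative $(Q:a_i)=(0:a_i)$, obtaining $g(Q_1^{i-1},a_i,Q_{i+1}^n)=0$; then, for entries $a_i\in Q_i\cap(Q:s)$, it replaces the other coordinates by $f(a_j,x_j,0^{(m-2)})$ with $x_j\in Q_j\setminus(Q:s)$ and expands by distributivity to reduce to the already-handled case. You should reorganize your converse along these lines---feed the dichotomy the $a_i$'s themselves, not their products.
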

\begin{proof}
($\Longrightarrow$)  Assume that $Q$ is a strongly weakly $n$-ary $S$-prime hyperideal of $A$ such that $Q$ is associated to $s$. Let $a \notin (Q : s)$ and $(Q :a)  \neq (0 : a)$. Then there exists $x \in (Q :a)$ such that $g(x,a,1_A^{(n-2)}) \neq 0$ as $(0 : a) \subseteq (Q :a)$. Since $0 \neq g(x,a,1_A^{(n-2)}) \in Q$ and $g(s,a,1_A^{(n-2)}) \notin Q$, we get $g(s,x,1_A^{(n-2)}) \in Q$. Take any $b \in (Q :a)$. So $g(a,b,1_A^{(n-2)}) \in Q$. Let $0 \neq g(a,b,1_A^{(n-2)})$. Therefore $g(s,b,1_A^{(n-2)}) \in Q$ which means $b \in (Q : s)$. If $0 = g(a,b,1_A^{(n-2)})$, then $0 \neq g(a,x,1_A^{(n-2)})=f(g(a,x,1_A^{(n-2)}),g(a,b,1_A^{(n-2)}),0^{(m-2)})=g(a,f(x,b,0^{(m-2)}),1_A^{(n-2)}) \in Q$. Since $Q$ is a strongly weakly $n$-ary $S$-prime hyperideal of $A$ and $a \notin (Q : s)$, we get the result that $f(g(s,x,1_A^{(n-2)}),g(s,b,1_A^{(n-2)}),0^{(m-2)})=g(s,f(x,b,0^{(m-2)}),1_A^{(n-2)}) \subseteq Q$ which implies $g(s,b,1_A^{(n-2)}) \in Q$ as $g(s,x,1_A^{(n-2)}) \in Q$. This means that $b \in (Q :s)$ and so $(Q : a) \subseteq (Q : s)$.\\

($\Longleftarrow$) Let $0 \neq g(Q_1^n) \subseteq Q$ for hyperideals $Q_1^n $ of $A$ such that $g(s,Q_i,1_A^{(n-2)}) \nsubseteq Q$ for  all $i \in \{1,\cdots,n\}$ and the element $s$ mentioned in the hypothesis. Take any  $a_i \in Q_i \backslash (Q : s)$ for $i \in \{1,\cdots,n\}$. Then $g(Q_1^{i-1},a_i,Q_{i+1}^n) \subseteq Q$ which means $g(Q_1^{i-1},1_A,Q_{i+1}^n) \subseteq (Q : a_i)$. Since $g(Q_1^{i-1},1_A,Q_{i+1}^n) \subseteq Q_j \nsubseteq (Q : s)$ for each $j \neq i$, we conclude that $ g(Q_1^{i-1},1_A,Q_{i+1}^n) \subseteq (Q : a_i) = (0 : a_i)$ which implies $g(Q_1^{i-1},a_i,Q_{i+1}^n)=0$. Now, let $a_i \in Q_i \cap  (Q : s)$ for some $i \in \{1,\cdots,n\}$ and $a_j \in Q_j$ for $j \in \{1,\cdots,\widehat{i},\cdots,n\}$. If $a_j \notin (Q : s)$, then we obtain $g(Q_1^{j-1},a_j,Q_{j+1}^n) =0$. Let $a_j \in (Q : s)$. By the hypothesis, there exists $x_j \in Q_j$ such that $g(s,x_j,1_A^{(n-2)}) \notin Q$. This means $x_j \notin (Q : s)$ and so $f(a_j,x_j,0^{(m-2)}) \nsubseteq (Q : s)$. 
Therefore  $g(f(a_1,x_1,0^{(m-2)}),\cdots, f(a_{i-1},x_{i-1},0^{(m-2)}),a_i,f(a_{i+1},x_{i+1},0^{(m-2)}),\cdots,f(a_n,x_n,\break 0^{(m-2)}))=0$. Then we conclude that $g(a_1^n)=0$ and so $g(Q_1^n)=0$ which is impossible. Thus $Q$ is a strongly weakly $n$-ary $S$-prime hyperideal of $A$.
\end{proof}
\begin{corollary}
Let $Q$ be a hyperideal of $A$. Then $Q$ is a strongly weakly $n$-ary prime hyperideal of $A$ if and only if  for every $a \notin Q$, either $(Q : a) = Q$ or $(Q :a) =(0 : a)$.
\end{corollary}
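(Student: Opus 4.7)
The plan is to derive this corollary as a direct specialization of Theorem \ref{17} to the trivial multiplicative set $S=\{1_A\}$. With this choice, the notion of strongly weakly $n$-ary $S$-prime coincides exactly with strongly weakly $n$-ary prime, since the witness $s$ can be taken to be $1_A$ and $g(1_A,Q_i,1_A^{(n-2)})=Q_i$. So the ``if and only if'' of the corollary should be read off from the ``if and only if'' of Theorem \ref{17} once one simplifies the hyperideal $(Q:s)$ in the case $s=1_A$.

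First I would observe that $(Q:1_A)=\{a\in A\mid g(a,1_A,1_A^{(n-2)})\in Q\}=\{a\in A\mid a\in Q\}=Q$. So the condition ``$a\notin(Q:s)$'' in Theorem \ref{17} becomes simply ``$a\notin Q$'', and the alternative ``$(Q:a)\subseteq(Q:s)$'' becomes ``$(Q:a)\subseteq Q$''. Since $Q$ is a hyperideal, for any $q\in Q$ and any $a\in A$ we have $g(q,a,1_A^{(n-2)})\in Q$, hence $q\in(Q:a)$; this shows $Q\subseteq(Q:a)$ always holds. Combining the two inclusions yields the equivalence $(Q:a)\subseteq Q \Longleftrightarrow (Q:a)=Q$, which is exactly the formulation in the corollary.

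With these observations in place, both implications are immediate. For the forward direction, if $Q$ is strongly weakly $n$-ary prime, then it is strongly weakly $n$-ary $\{1_A\}$-prime, so by Theorem \ref{17} applied with $s=1_A$, every $a\notin Q$ satisfies either $(Q:a)\subseteq Q$, i.e., $(Q:a)=Q$, or $(Q:a)=(0:a)$. Conversely, the hypothesis on every $a\notin Q$ together with $(Q:1_A)=Q$ is exactly the statement provided by Theorem \ref{17} with $S=\{1_A\}$, and therefore $Q$ is strongly weakly $n$-ary $\{1_A\}$-prime, hence strongly weakly $n$-ary prime.

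I do not anticipate a genuine obstacle here; the only delicate point is the small verification that $Q\subseteq(Q:a)$ so that the two formulations of the weaker alternative coincide. Because the corollary is essentially a translation of Theorem \ref{17} under $S=\{1_A\}$, the proof can reasonably be compressed to one or two lines invoking that theorem.
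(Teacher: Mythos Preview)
Your proposal is correct and follows exactly the paper's approach: the paper's proof is the single line ``By taking $S=\{1\}$ in Theorem \ref{17}, we are done.'' Your additional verification that $(Q:1_A)=Q$ and that $Q\subseteq(Q:a)$ (so the inclusion $(Q:a)\subseteq(Q:s)$ of Theorem \ref{17} becomes the equality $(Q:a)=Q$) just makes explicit what the paper leaves implicit.
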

\begin{proof}
By taking $S = \{1\}$ in Theorem \ref{17}, we are done.
\end{proof}
\begin{theorem} \label{18}
Let $Q$ be a hyperideal of $A$ and $S \subseteq A$  is an $n$-ary multiplicative set. If  $Q$ is a strongly weakly $n$-ary $S$-prime hyperideal of $A$ that is not $n$-ary $S$-prime, then $g(g(s,rad(0),1_A^{(n-2)}),Q^{(n-1)})=0$ for some $s \in S$.
\end{theorem}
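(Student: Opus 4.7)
The plan is to take $s \in S$ to be the element to which $Q$ is associated as a strongly weakly $n$-ary $S$-prime hyperideal, and prove by contradiction that this $s$ itself satisfies the desired equation, using Theorem \ref{jalili} on a tuple built from $y$ and copies of $1_A$. Two preparatory facts will be in play: Theorem \ref{14} gives $g(Q^{(n)}) = 0$, and since $Q$ is not $n$-ary $S$-prime there exist $b_1^n \in A$ with $g(b_1^n) \in Q$ and $g(s,b_i,1_A^{(n-2)}) \notin Q$ for every $i$; weak $S$-primeness then forces $g(b_1^n) = 0$, so Theorem \ref{jalili} applies.

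Assume for contradiction that $g(g(s,y_0,1_A^{(n-2)}),q_1^{n-1}) \neq 0$ for some $y_0 \in rad(0)$ and some $q_1,\ldots,q_{n-1} \in Q$. By commutativity and associativity this element equals $g(s,g(y_0,q_1^{n-1}),1_A^{(n-2)})$, so $g(y_0,q_1^{n-1}) \neq 0$; and $g(y_0,q_1^{n-1}) \in Q$ because $Q$ is a hyperideal. Also $g(s,y_0,1_A^{(n-2)}) \notin Q$, for if it lay in $Q$ the entire expression would belong to $g(Q^{(n)}) = 0$, contradicting the supposition.

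Using $y_0 \in rad(0)$, pick $u \leq n$ with $g(y_0^{(u)},1_A^{(n-u)}) = 0$ and set $a_1 = \cdots = a_u = y_0$, $a_{u+1} = \cdots = a_n = 1_A$. Then $g(a_1^n) = 0$, and for each $i$ the element $g(s,a_i,1_A^{(n-2)})$ equals either $g(s,y_0,1_A^{(n-2)}) \notin Q$ or $g(s,1_A,1_A^{(n-2)}) = s \notin Q$ (because $S \cap Q = \varnothing$). So Theorem \ref{jalili} applies to $(a_1,\ldots,a_n)$, and its conclusion at $v = n-1$, taken with an index $j$ for which $a_j = y_0$, yields $g(y_0,Q^{(n-1)}) = 0$; in particular $g(y_0,q_1^{n-1}) = 0$, the contradiction needed.

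The hard part will be the extended case of $rad(0)$: when $y_0$'s nilpotency is witnessed only by $g_{(l)}(y_0^{(l(n-1)+1)}) = 0$ for some $l \geq 2$ rather than by any $g(y_0^{(u)},1_A^{(n-u)}) = 0$ with $u \leq n$. The idea here is to factor $g_{(l)}(y_0^{(l(n-1)+1)}) = g(z, y_0^{(n-1)})$ with $z = g_{(l-1)}(y_0^{((l-1)(n-1)+1)}) \in rad(0)$, observe that $g(z,z,1_A^{(n-2)}) = 0$ (since $2((l-1)(n-1)+1) \geq l(n-1)+1$ for $l \geq 2$, so $z$ itself sits in the direct case), and apply Theorem \ref{jalili} to the tuple $(z, y_0, \ldots, y_0)$: when $g(s,z,1_A^{(n-2)}) \notin Q$ the direct argument closes immediately, and when $g(s,z,1_A^{(n-2)}) \in Q$ one combines $g(Q^{(n)}) = 0$ with the just-established direct-case conclusion for $z$ to finish the induction.
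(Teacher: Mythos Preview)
Your route differs from the paper's. The paper appeals to Theorem \ref{17} (the dichotomy that for $a \notin (Q:s)$ one has $(Q:a) \subseteq (Q:s)$ or $(Q:a)=(0:a)$) and handles the nilpotent element $x$ by ruling out the first alternative via a minimal-exponent argument; you bypass Theorem \ref{17} entirely and work directly with Theorem \ref{jalili}. In the range $u \leq n$ your argument is correct and arguably cleaner: from $g(y_0^{(u)},1_A^{(n-u)})=0$ and the observations $g(s,y_0,1_A^{(n-2)}) \notin Q$, $s \notin Q$, Theorem \ref{jalili} with $v=n-1$ indeed gives $g(y_0,Q^{(n-1)})=0$, which is exactly the contradiction.

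The gap is in your treatment of the iterated case $g_{(l)}(y_0^{(l(n-1)+1)})=0$, $l \geq 2$. Your Case A (when $g(s,z,1_A^{(n-2)}) \notin Q$) is fine: Theorem \ref{jalili} applied to $(z,y_0,\ldots,y_0)$ again delivers $g(y_0,Q^{(n-1)})=0$. But in Case B (when $g(s,z,1_A^{(n-2)}) \in Q$) your sentence ``one combines $g(Q^{(n)})=0$ with the just-established direct-case conclusion for $z$ to finish the induction'' does not carry weight. The direct-case conclusion for $z$ says: \emph{if} $g(s,z,1_A^{(n-2)}) \notin Q$ \emph{then} $g(z,Q^{(n-1)})=0$; in Case B its hypothesis fails, so it yields nothing. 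From $g(s,z,1_A^{(n-2)}) \in Q$ and $g(Q^{(n)})=0$ you get only $g(g(s,z,1_A^{(n-2)}),Q^{(n-1)})=0$, a statement about $z$, not about $y_0$; nor have you set up any induction on $l$ with a hypothesis strong enough to transfer information from $z$ back to $y_0$. As it stands the extended case is unresolved. Either formulate an explicit induction on $l$ with a suitable statement (which will need more than what you have written), or fall back on the paper's approach through Theorem \ref{17} to handle this case.
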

\begin{proof}
Suppose that $Q$ is a strongly weakly $n$-ary $S$-prime hyperideal of $A$. Then there exists an element $s \in S$ such that for every $a \notin (Q : s)$, either $(Q : a) \subseteq (Q : s)$ or $(Q :a) =(0 : a)$, by Theorem \ref{17}. Take any $x \in rad(0)$. If $x \in (Q : s)$, then $g(s,x,1_A^{(n-2)}) \in Q$ and so $g(g(s,rad(0),1_A^{(n-2)}),Q^{(n-1)})=0$ by Theorem \ref{14}. Now, let $x \notin (Q : s)$. This implies that $(Q : x) \subseteq (Q : s)$ or $(Q :x) =(0 : x)$. The first case leads to the following contradiction. Since $x \in rad(0)$,  there exists $t \in \mathbb{N}$ such that $g(x^{(t)},1_A^{(n-t)})=0$ for $t \leq n$ or $g_{(l)}(x^{(t)})=0$ for $t=l(n-1)+1$. Assume that $t$ is a minimal integer  satisfying the possibilities. If $g(x^{(t)},1_A^{(n-t)})=0$  for $t \leq n$, then $g(x^{(t-1)},1_A^{(n-t+1)}) \in (Q : x) \subseteq (Q : s)$ which means $g(g(x^{(t-1)},1_A^{(n-t+1)}),s,1_A^{(n-2)})=g(x^{(t-1)},s,1_A^{(n-t)}) \in Q$. If $0 \neq g(x^{(t-1)},s,1_A^{(n-t)})$, then we get $g(s,x,1_A^{(n-2)}) \in Q$ which is impossible. Therefore $g(x^{(t-1)},s,1_A^{(n-t)})=0$. Assume that $u$ is a minimal integer satisfying $g(s,x^u,1_A^{(n-2)})=0$. Assume that $g(g(s,x,1_A^{(n-2)}),q_1^{n-1}) \neq 0$ for some $q_1^{n-1} \in Q$. Since $Q$ is a strongly weakly $n$-ary $S$-prime hyperideal of $A$ and $0 \neq g(g(s,x,1_A^{(n-2)}),q_1^{n-1})=g(g(s,x,1_A^{(n-2)}),f(x^{u-1},g(q_1^{n-1},1_A),0^{(m-2)}),1_A^{(n-2)}) \subseteq Q$, we obtain $g(s,f(x^{u-1},g(q_1^{n-1},1_A),0^{(m-2)}),1_A^{(n-2)}) \subseteq Q$. Since $g(s,q_1^{n-1}) \in Q$, we have  $0 \neq g(s,x^{(u-1)},1_A^{(n-u)}) \in Q$ which implies $g(s,x,1_A^{(n-2)}) \in Q$ which is a contradiction. If $g_{(l)}(x^{(t)})=0$ for $t=l(n-1)+1$, then by using a similar argument, we get a contradiction. In the second case, we get the result that $g(x,Q,1_A^{(n-2)})=0$ as $Q \subseteq (Q : x)$. Thus $g(s,g(x,Q,1_A^{(n-2)}),Q^{(n-2)})=g(g(s,x,1_A^{(n-2)}),Q^{(n-1)})=0$ and so  $g(g(s,rad(0),1_A^{(n-2)}),Q^{(n-1)})=0$.
\end{proof}
\begin{corollary} \label{19}
Let $Q_1$ and $Q_2$ be  two hyperideals of $A$ and $S \subseteq A$  is an $n$-ary multiplicative set. If  $Q_1$ and $Q_2$ are  strongly weakly $n$-ary $S$-prime hyperideals of $A$ that are not $n$-ary $S$-prime, then we have $g(g(s,Q_1,1_A^{(n-2)}),Q_2^{(n-1)})=g(g(s,Q_2,1_A^{(n-2)}),Q_1^{(n-1)})=0$ for some $s \in S$.
\end{corollary}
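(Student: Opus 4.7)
The plan is to apply Theorem \ref{18} to each of $Q_1$ and $Q_2$ separately, obtaining witnesses $s_1,s_2\in S$ for the two identities, and then to fuse them into a single witness $s\in S$ using the $n$-ary multiplicativity of $S$ together with the generalized associativity and commutativity of $g$.

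First, I would invoke Theorem \ref{14}: since each $Q_i$ is strongly weakly $n$-ary $S$-prime but not $n$-ary $S$-prime, $g(Q_i^{(n)})=0$ for $i\in\{1,2\}$. Consequently every $x\in Q_i$ satisfies $g(x^{(n)})=0$, hence $x\in rad(0)$, so $Q_1,Q_2\subseteq rad(0)$. Next, Theorem \ref{18} applied to $Q_1$ produces $s_1\in S$ with
\[g(g(s_1,rad(0),1_A^{(n-2)}),Q_1^{(n-1)})=0,\]
and applied to $Q_2$ produces $s_2\in S$ with the analogous identity. Because $Q_2\subseteq rad(0)$ and $Q_1\subseteq rad(0)$, these specialize to
\[g(g(s_1,Q_2,1_A^{(n-2)}),Q_1^{(n-1)})=0 \quad \text{and} \quad g(g(s_2,Q_1,1_A^{(n-2)}),Q_2^{(n-1)})=0.\]

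To combine the two witnesses into a single $s\in S$, I set $s:=g(s_1,s_2^{(n-1)})$, which lies in $S$ by the $n$-ary multiplicativity of $S$. For arbitrary $q\in Q_1$ and $q'_1,\ldots,q'_{n-1}\in Q_2$, the expression $g(g(s,q,1_A^{(n-2)}),q'_1,\ldots,q'_{n-1})$ is the $(3n-2)$-fold iterated product $g_{(3)}$ on the tuple $(s_1,s_2^{(n-1)},q,1_A^{(n-2)},q'_1,\ldots,q'_{n-1})$. Using generalized associativity together with commutativity of $g$, I would regroup so that the $(2n-1)$-ary subproduct $g_{(2)}(s_2,q,1_A^{(n-2)},q'_1,\ldots,q'_{n-1})=g(g(s_2,q,1_A^{(n-2)}),q'_1^{n-1})$ appears as an inner block; this equals $0$ by the specialized identity above, and since $0$ is absorbing for $g$ the entire expression vanishes, yielding $g(g(s,Q_1,1_A^{(n-2)}),Q_2^{(n-1)})=0$. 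The symmetric regrouping (now placing $g(g(s_1,q,1_A^{(n-2)}),q'_1^{n-1})$ as the inner block for $q\in Q_2$, $q'_i\in Q_1$, and appealing to Theorem \ref{18} for $Q_1$) yields $g(g(s,Q_2,1_A^{(n-2)}),Q_1^{(n-1)})=0$.

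The main obstacle is the combinatorial bookkeeping needed to justify these regroupings, namely verifying that the generalized associativity and commutativity of the $n$-ary operation $g$ really do allow the desired $(2n-1)$-ary subproduct to be pulled inward without disturbing the remaining factors. Once that bookkeeping is settled, the corollary follows immediately from Theorems \ref{14} and \ref{18} together with the containments $Q_1,Q_2\subseteq rad(0)$.
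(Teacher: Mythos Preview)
Your proposal is correct and follows essentially the same route as the paper: use Theorem \ref{14} to place $Q_1,Q_2\subseteq rad(0)$, then invoke Theorem \ref{18} and specialize via these containments. The only difference is that the paper simply writes the inclusion chain $g(g(s,Q_1,1_A^{(n-2)}),Q_2^{(n-1)})\subseteq g(g(s,rad(0),1_A^{(n-2)}),Q_2^{(n-1)})=0$ with the $s$ coming from Theorem \ref{18} applied to $Q_2$ (and symmetrically for the other identity), without explicitly merging the two witnesses into one; your extra step of setting $s=g(s_1,s_2^{(n-1)})$ and regrouping via generalized associativity is a legitimate refinement that makes the ``for some $s\in S$'' clause unambiguous.
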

\begin{proof}
Let $Q_1$ and $Q_2$ be  two strongly weakly $n$-ary $S$-prime hyperideals of $A$ that are not $n$-ary $S$-prime. Then we get the result that  $Q_1, Q_2 \subseteq rad(0)$ by Theorem \ref{14}. By Theorem \ref{18}, we have $g(g(s,Q_1,1_A^{(n-2)}),Q_2^{(n-1)})=g(Q_1,g(s,Q_2^{(n-1)}),1_A^{(n-2)}) \subseteq g(rad(0), g(s,Q_2^{(n-1)}),1_A^{(n-2)})=g(g(s,rad(0),1_A^{(n-2)}),Q_2^{(n-1)})=0 $ . Similarly, we can conclude that $g(g(s,Q_2,1_A^{(n-2)}),Q_1^{(n-1)})=0$
\end{proof}
\begin{corollary}\label{120}
Let $Q$ be  a strongly weakly $n$-ary prime hyperideal of $A$ but is not $n$-ary prime hyperideal. Then $g(rad(0),Q^{(n-1)})=0$.
\end{corollary}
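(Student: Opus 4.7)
The plan is to derive this corollary as an immediate specialization of Theorem \ref{18} by taking the multiplicative set $S=\{1_A\}$. The key observation is that the hypothesis of Corollary \ref{120} is the $S=\{1_A\}$ version of Theorem \ref{18}'s hypothesis: a strongly weakly $n$-ary prime hyperideal is precisely a strongly weakly $n$-ary $\{1_A\}$-prime hyperideal, and an $n$-ary prime hyperideal is precisely an $n$-ary $\{1_A\}$-prime hyperideal. Since $Q \cap \{1_A\} = \varnothing$ is automatic (as $Q$ is a proper hyperideal), the set $\{1_A\}$ is a valid $n$-ary multiplicative set for which Theorem \ref{18} applies.

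First I would invoke Theorem \ref{18} with $S=\{1_A\}$ to conclude that $g(g(s,rad(0),1_A^{(n-2)}),Q^{(n-1)})=0$ for some $s\in S$. But the only choice is $s=1_A$, so this equality reads
\[
g\bigl(g(1_A,rad(0),1_A^{(n-2)}),Q^{(n-1)}\bigr)=0.
\]
The next step is to simplify the inner expression: since $1_A$ is the scalar identity for the $n$-ary operation $g$, we have $g(1_A,x,1_A^{(n-2)})=x$ for every $x\in A$, and hence $g(1_A,rad(0),1_A^{(n-2)})=rad(0)$. Substituting this simplification gives $g(rad(0),Q^{(n-1)})=0$, which is exactly the claim.

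There is no real obstacle here; the only thing to verify carefully is that the specialization is legitimate, namely that $\{1_A\}$ is indeed an $n$-ary multiplicative subset (which follows from $g(1_A^{(n)})=1_A$) and that it meets the disjointness condition with $Q$. Everything else is a direct translation of the more general result together with the scalar identity property of $1_A$.
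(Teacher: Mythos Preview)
Your proof is correct and follows exactly the same approach as the paper: both obtain the corollary by specializing Theorem \ref{18} to the multiplicative set $S=\{1_A\}$. Your version simply spells out the identity simplification $g(1_A,rad(0),1_A^{(n-2)})=rad(0)$ that the paper leaves implicit.
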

\begin{proof}
By taking $S=\{1\}$ in Theorem \ref{18}, we are done.
\end{proof}
The notion of Krasner $(m,n)$-hyperring of fractions was introduced and studied in \cite{mah5}.
\begin{theorem} \label{121}
Let $S \subseteq A$ be an $n$-ary multiplicative set with $1_A \
\in S$. If $Q$ is a weakly $n$-ary $S$-prime hyperideal of $A$, then $S^{-1}Q$ is a weakly  $n$-ary prime hyperideal of $S^{-1}A$.
\end{theorem}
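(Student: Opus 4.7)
The plan is to unfold the definitions of the hyperring of fractions $S^{-1}A$ and of weakly $n$-ary prime hyperideals, and reduce the statement to a single application of the weakly $n$-ary $S$-prime property of $Q$ inside $A$.

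Take fractions $\frac{a_1}{s_1},\ldots,\frac{a_n}{s_n}\in S^{-1}A$ with $0\ne G\bigl(\frac{a_1}{s_1},\ldots,\frac{a_n}{s_n}\bigr)=\frac{g(a_1^n)}{g(s_1^n)}\in S^{-1}Q$, and write this common value as $\frac{q}{t}$ with $q\in Q$, $t\in S$. The equivalence relation defining $S^{-1}A$ yields $u\in S$ such that, after regrouping via commutativity and associativity, the element $g(w,g(a_1^n),1_A^{(n-2)})$ (where $w:=g(u,t,1_A^{(n-2)})\in S$) equals a companion $n$-ary product that has $q$ in one slot, and hence lies in $Q$. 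Likewise, the assumption $\frac{g(a_1^n)}{g(s_1^n)}\ne\frac{0}{1_A}$ unpacks via the same equivalence and the canonical hypergroup identity $f(x,0^{(m-1)})=\{x\}$ to the statement that $g(v,g(a_1^n),1_A^{(n-2)})\ne 0$ for every $v\in S$; in particular the nonvanishing holds for $v=w$.

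A single further use of associativity rewrites $g(w,g(a_1^n),1_A^{(n-2)})$ as a genuine $n$-ary product $g(b_1^n)$ with $b_1:=g(w,a_1,1_A^{(n-2)})$ and $b_i:=a_i$ for $i\ge 2$. Thus $0\ne g(b_1^n)\in Q$, so the weakly $n$-ary $S$-prime property of $Q$ supplies an $s\in S$ with $g(s,b_i,1_A^{(n-2)})\in Q$ for some $i$. For $i\ge 2$ this is exactly $g(s,a_i,1_A^{(n-2)})\in Q$, and then the identity $\frac{a_i}{s_i}=\frac{g(s,a_i,1_A^{(n-2)})}{g(s,s_i,1_A^{(n-2)})}$ places $\frac{a_i}{s_i}$ in $S^{-1}Q$; for $i=1$, associativity repackages the membership as $g(s',a_1,1_A^{(n-2)})\in Q$ with $s':=g(s,w,1_A^{(n-2)})\in S$, and the same move puts $\frac{a_1}{s_1}$ in $S^{-1}Q$.

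The main obstacle I anticipate is the arity bookkeeping when translating back and forth between $S^{-1}A$ and $A$: one has to be careful that the elements of $S$ inserted by the fraction equivalence relation do not swell the number of slots beyond $n$, and that the ``$0\ne$'' half of the hypothesis actually survives the passage to $A$. The canonical hypergroup identity $f(x,0^{(m-1)})=\{x\}$ is the key that makes the nonvanishing transfer cleanly, and associativity of $g$ is what lets the extra $u,t,w$ factors be absorbed into a single leading argument $b_1$. Once these regroupings are in place, the rest is a direct application of the weakly $n$-ary $S$-prime definition.
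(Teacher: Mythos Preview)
Your proposal is correct and follows essentially the same approach as the paper's proof: rewrite $G(\frac{a_1}{s_1},\ldots,\frac{a_n}{s_n})=\frac{g(a_1^n)}{g(s_1^n)}$, use membership in $S^{-1}Q$ to find $t\in S$ (your $w$) with $g(t,g(a_1^n),1_A^{(n-2)})\in Q$, absorb $t$ into the first slot to get an $n$-ary product $g(g(t,a_1,1_A^{(n-2)}),a_2^n)$, apply the weakly $n$-ary $S$-prime hypothesis, and translate back to $S^{-1}Q$ via the identities $\frac{a_i}{s_i}=\frac{g(s,a_i,1_A^{(n-2)})}{g(s,s_i,1_A^{(n-2)})}$. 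The one place where you are more explicit than the paper is in justifying that the nonvanishing $0\neq g(g(t,a_1,1_A^{(n-2)}),a_2^n)$ survives the passage from $S^{-1}A$ to $A$; the paper simply asserts this, whereas you correctly unpack $\frac{g(a_1^n)}{g(s_1^n)}\neq\frac{0}{1_A}$ as $g(v,g(a_1^n),1_A^{(n-2)})\neq 0$ for all $v\in S$.
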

\begin{proof}
Let $0 \neq G(\frac{a_1}{s_1},...,\frac{a_n}{s_n}) \in S^{-1}Q$ for $\frac{a_1}{s_1},...,\frac{a_n}{s_n} \in S^{-1}A$.
This means that $\frac{g(a_1^n)}{g(s_1^n)} \in S^{-1}Q$ and so there exists $t \in S$ such that $ g(t,g(a_1^n),1_A^{(n-2)}) \in Q$. Since $Q$ is a weakly $n$-ary $S$-prime hyperideal of $A$ and $0 \neq g(g(t,a_1,1^{(n-2)}),a_2^n) \in Q$, there exists $s \in S$ such that $g(s,g(t,a_1,1_A^{(n-2)}),1_A^{(n-2)})=g(s,t,a_1,1_A^{(n-3)}) \in Q$ or $g(s,a_i,1_A^{(n-2)}) \in Q$ for some $i \in \{2,\cdots,n\}$. Therefore we obtain $G(\frac{a_1}{s_1},\frac{1_A}{1_A}^{(n-1)})=\frac{g(a_1,1_A^{(n-1)})}{g(s_1,1_A^{(n-1)})}=\frac{g(s,t,a_1,1_A^{(n-3)})}{g(s,t,s_1,1_A^{(n-2)})} \in S^{-1}Q$ or $G(\frac{a_i}{s_i},\frac{1_A}{1_A}^{(n-1)})=\frac{g(a_i,1_A^{(n-1)})}{g(s_i,1_A^{(n-1)}))}=\frac{g(s,a_i,1_A^{(n-2)})}{g(s,s_i,1_A^{(n-2)})} \in S^{-1}Q$  for some $i \in \{2,\cdots,n\}$. Hence  $S^{-1}Q$ is a weakly  $n$-ary prime hyperideal of $S^{-1}A$.
\end{proof}
\begin{theorem}
Let $S \subseteq A$ be an $n$-ary multiplicative set with $1_A  \in S$ and $Q$ be a hyperideal of $A$ with $Q \cap S=\varnothing$ . If $S^{-1}Q$ is a weakly $n$-ary prime hyperideal of $S^{-1}A$ and $S^{-1}Q\cap A=(Q : s)$ for some $ s \in S$, then $Q$ is a weakly $n$-ary $S$-prime hyperideal of $A$
\end{theorem}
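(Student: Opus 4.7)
The plan is to argue that the very element $s \in S$ supplied by the hypothesis witnesses the weakly $n$-ary $S$-prime condition for $Q$. To this end I would fix arbitrary $a_1,\dots,a_n \in A$ with $0 \ne g(a_1^n) \in Q$ and aim to produce some index $i \in \{1,\dots,n\}$ for which $g(s,a_i,1_A^{(n-2)}) \in Q$. The argument will split according to whether the image of $g(a_1^n)$ in the localization $S^{-1}A$ is zero or not, and in the nonzero case it will reduce cleanly to the weak $n$-ary primality of $S^{-1}Q$.

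First I would lift the problem to $S^{-1}A$. Note that $Q \subseteq (Q:s) = S^{-1}Q \cap A$, since $g(s,q,1_A^{(n-2)}) \in Q$ for every $q \in Q$ ($Q$ being a hyperideal). Hence $\frac{g(a_1^n)}{1_A} \in S^{-1}Q$, and by the definition of the $n$-ary operation on $S^{-1}A$ this element factors as $G\!\left(\frac{a_1}{1_A},\dots,\frac{a_n}{1_A}\right)$. In the generic case $\frac{g(a_1^n)}{1_A} \ne 0$ in $S^{-1}A$, the weak $n$-ary primality of $S^{-1}Q$ yields an index $i$ with $\frac{a_i}{1_A} \in S^{-1}Q$, so $a_i \in S^{-1}Q \cap A = (Q:s)$, which is precisely $g(s,a_i,1_A^{(n-2)}) \in Q$, as desired.

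The remaining case, in which $\frac{g(a_1^n)}{1_A} = 0$ in $S^{-1}A$ even though $g(a_1^n) \ne 0$ in $A$, is the step I expect to be the main obstacle. By the equivalence relation defining the $(m,n)$-hyperring $S^{-1}A$, this vanishing means there is some $u \in S$ with $g(u,a_1^n,1_A^{(n-2)}) = 0$ in $A$. I plan to reduce this subcase to the previous one by a perturbation argument: replace one of the $a_i$ (say $a_1$) by an element chosen from $f(a_1,q,0^{(m-2)})$ for an appropriate $q \in Q$, so that distributivity of $g$ over $f$ keeps the resulting product inside $Q$, the nonvanishing of $g(a_1^n)$ in $A$ is preserved, and the image in $S^{-1}A$ is shifted away from zero; then the first case applies to the perturbed tuple and delivers the required conclusion (which, since the perturbation was by an element of $Q$, transports back to one of the original $a_i$). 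Carrying out this perturbation cleanly in the hyperring setting — where $f$ is multivalued, equalities hold only up to inclusion in $f$-sums, and the kernel of $A \to S^{-1}A$ can interact nontrivially with $Q$ — is the delicate point, because one must simultaneously preserve the hyperideal membership in $Q$, the nonvanishing in $A$, and the nonvanishing in $S^{-1}A$ in order to legitimately invoke the weak primality of $S^{-1}Q$.
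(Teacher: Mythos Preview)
Your nonzero case is exactly the paper's proof in its entirety: the paper fixes $s$ from the hypothesis, takes $0\neq g(a_1^n)\in Q$, asserts $0\neq G(\frac{a_1}{1_A},\dots,\frac{a_n}{1_A})\in S^{-1}Q$, applies weak $n$-ary primality of $S^{-1}Q$ to get $\frac{a_i}{1_A}\in S^{-1}Q$, and reads off $a_i\in S^{-1}Q\cap A=(Q:s)$. The paper does \emph{not} split into cases and does \emph{not} address the possibility that $\frac{g(a_1^n)}{1_A}$ could vanish in $S^{-1}A$; it simply writes ``$0\neq G(\tfrac{a_1}{1_A},\dots,\tfrac{a_n}{1_A})$'' as if it followed from $g(a_1^n)\neq 0$. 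So the concern you flag is one the paper has silently skipped over, and in that sense you are being more careful than the source.

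That said, your proposed perturbation does not appear to close the gap. You want $q\in Q$ so that some element of $g\bigl(f(a_1,q,0^{(m-2)}),a_2^n\bigr)=f\bigl(g(a_1^n),g(q,a_2^n),0^{(m-2)}\bigr)$ has nonzero image in $S^{-1}A$; but both summands lie in $Q$, and nothing in the hypotheses prevents all of $g(Q,a_2^n)$ (indeed all of $Q$) from mapping to $0$ in $S^{-1}A$---precisely the situation in which $S^{-1}Q=\{0\}$ and the weak primality hypothesis is vacuous. In that scenario no perturbation by elements of $Q$ can produce a nonzero image, and the transport-back step (``the perturbation was by an element of $Q$, so the conclusion returns to some original $a_i$'') never gets off the ground. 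If you wish to rescue the argument you will need an additional hypothesis (for instance, that the canonical map $A\to S^{-1}A$ is injective on $Q$, or that $(0:s')\subseteq Q$ for some $s'\in S$), or an entirely different mechanism for the degenerate case; the perturbation sketch as written does not supply one.
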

\begin{proof}
Let $S^{-1}Q$ be a weakly $n$-ary prime hyperideal of $S^{-1}A$ and $S^{-1}Q  \cap A=(Q : s)$ for some $ s \in S$. Suppose that $0 \neq g(a_1^n) \in Q$ for some $a_1^n \in A$. Then we have $0 \neq G(\frac{a_1}{1_A},\cdots,\frac{a_n}{1_A}) \in S^{-1}Q$. Since $S^{-1}Q$ is a weakly  $n$-ary prime hyperideal of $S^{-1}A$, we get $\frac{a_i}{1} \in S^{-1}Q$ for some $i \in \{1,\cdots,n\}$ which means $g(t,a_i,1_A^{(n-2)}) \in Q$ for some $t \in S$. Therefore $a_i=\frac{g(t,a_i,1_A^{(n-2)})}{g(t,1_A^{(n-1)})} \in S^{-1}Q$. This means $a_i \in (Q : s)$. Therefore we have $g(s,a_i,1_A^{(n-2)}) \in Q$. This shows that $I$ is a weaky  $n$-ary $S$-prime hyperideal of $A$.
\end{proof}
\begin{theorem}
Let $S \subseteq A$ be an $n$-ary multiplicative set with $1_A \in S$ and $Q$ be a hyperideal of $A$ with $Q \cap S=\varnothing$. If   there exists $t \in S$ satisfying $(Q : s) \subseteq (Q : t)$ for all $s \in S$ and $S^{-1}Q$ is a weakly $n$-ary prime hyperideal of $S^{-1}A$, then $Q$ is a weakly $n$-ary $S$-prime hyperideal of $A$.
\end{theorem}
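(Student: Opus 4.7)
The plan is to mirror closely the proof of the preceding theorem, with the new containment condition on the colon hyperideals playing the role previously played by the hypothesis $S^{-1}Q\cap A = (Q:s)$. The role of the fixed element $t \in S$ from the hypothesis is to serve as a single universal witness that $Q$ is weakly $n$-ary $S$-prime, independent of the tuple $a_1^n$ we start with.

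First I would fix $t \in S$ as in the hypothesis and take an arbitrary $a_1^n \in A$ with $0 \neq g(a_1^n) \in Q$. Pushing the relation into $S^{-1}A$, I obtain $G\!\left(\tfrac{a_1}{1_A},\ldots,\tfrac{a_n}{1_A}\right)=\tfrac{g(a_1^n)}{1_A}\in S^{-1}Q$. The first key step is to argue that this fraction is nonzero in $S^{-1}A$, so that the weakly $n$-ary prime hypothesis on $S^{-1}Q$ can be legitimately invoked; otherwise one would have some $u \in S$ with $g(u,g(a_1^n),1_A^{(n-2)})=0 \in Q$, which by associativity rewrites as $g(g(u,a_1,1_A^{(n-2)}),a_2^n)=0$, and a standard colon manipulation combined with $(Q:u)\subseteq (Q:t)$ pushes us back to a witness $g(t,a_i,1_A^{(n-2)})\in Q$ for some $i$.

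Assuming then that $G\!\left(\tfrac{a_1}{1_A},\ldots,\tfrac{a_n}{1_A}\right)\neq 0$ in $S^{-1}A$, the weakly $n$-ary prime hypothesis on $S^{-1}Q$ delivers an index $i\in\{1,\ldots,n\}$ with $\tfrac{a_i}{1_A}\in S^{-1}Q$. Unpacking this gives some $s_i\in S$ with $g(s_i,a_i,1_A^{(n-2)})\in Q$, that is, $a_i\in (Q:s_i)$. Now the crucial use of the new hypothesis: by assumption $(Q:s_i)\subseteq (Q:t)$, so $a_i \in (Q:t)$, which means $g(t,a_i,1_A^{(n-2)})\in Q$. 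Since this holds for every choice of $a_1^n$ with $0\neq g(a_1^n)\in Q$ and the witness $t$ does not depend on $a_1^n$, $Q$ is weakly $n$-ary $S$-prime with $t$ as its associated element of $S$.

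The main obstacle is the bookkeeping in the first step, verifying that $\tfrac{g(a_1^n)}{1_A}$ is nonzero in $S^{-1}A$ (or that the subcase in which it vanishes can be reduced to the nonzero case via the uniform containment $(Q:s)\subseteq (Q:t)$). Once that is settled, the remainder is essentially formal: weakly prime-ness in $S^{-1}A$ gives an $i$ together with some $s_i \in S$, and the $(Q:s_i)\subseteq (Q:t)$ hypothesis collapses the varying $s_i$ to the single, uniformly chosen $t$, which is precisely what the definition of weakly $n$-ary $S$-prime demands.
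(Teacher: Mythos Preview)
Your main line of argument---push $0\neq g(a_1^n)\in Q$ to $S^{-1}A$, apply the weakly $n$-ary prime hypothesis to obtain $\tfrac{a_i}{1_A}\in S^{-1}Q$, extract some $s\in S$ with $g(s,a_i,1_A^{(n-2)})\in Q$, and then use $(Q:s)\subseteq(Q:t)$ to replace $s$ by the fixed $t$---is exactly the paper's proof.

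The one difference is that you worry about whether $\tfrac{g(a_1^n)}{1_A}$ could vanish in $S^{-1}A$; the paper simply asserts $0\neq G(\tfrac{a_1}{1_A},\ldots,\tfrac{a_n}{1_A})$ without comment. Your instinct to check this is sound, but your proposed resolution of that subcase does not work as written: from $g(g(u,a_1,1_A^{(n-2)}),a_2^n)=0$ you cannot invoke any weakly prime hypothesis (the product is zero, not nonzero), and there is no ``standard colon manipulation'' that produces $a_i\in(Q:u)$ from a vanishing product. So either this case needs a genuinely different argument, or---as the paper implicitly does---one simply carries the hypothesis $0\neq g(a_1^n)$ forward and accepts the same level of rigor as in the preceding localization theorem.
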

\begin{proof}
Assume that there exists $t \in S$ satisfying $(Q : s) \subseteq (Q : t)$ for all $s \in S$. Let $0 \neq g(a_1^n) \in Q$ for $a_1^n \in A$. Therefore we get $0 \neq G(\frac{a_1}{1_A},\cdots,\frac{a_n}{1_A}) \in S^{-1}Q$. It follows that $\frac{a_i}{1} \in S^{-1}Q$ for some $i \in \{1,\cdots,n\}$ as $S^{-1}Q$ is a weakly $n$-ary prime hyperideal of $S^{-1}A$. Hence $g(s,a_i,1_A^{(n-2)}) \in Q$ for some $s\in S$ which implies $a_i \in (Q : s) \subseteq (Q : t)$ and so $g(t,a_i,1_A^{(n-2)}) \in Q$. Consequently, $Q$ is a weakly $n$-ary $S$-prime hyperideal of $A$.
\end{proof}
\begin{theorem}
Let $S \subseteq A$ be an $n$-ary multiplicative set with $1_A  \in S$. If   $A$ is an $n$-ary hyperintegral
domain and $S^{-1}A$ is a hyperfield, then $\langle 0 \rangle$ is the only weakly $n$-ary $S$-prime hyperideal of $A$.
\end{theorem}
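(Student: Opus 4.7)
My approach is to pass to the localization $S^{-1}A$ via Theorem \ref{121} and exploit the structural triviality of hyperideals in a hyperfield, then pull back using the hyperintegral domain hypothesis.

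First, I would observe that $\langle 0 \rangle$ is itself a weakly $n$-ary $S$-prime hyperideal of $A$. Since $S^{-1}A$ is a hyperfield it is nonzero, which forces $0 \notin S$, and hence $\langle 0 \rangle \cap S = \varnothing$. The defining implication (``$0 \neq g(a_1^n) \in \langle 0 \rangle$ implies $g(s,a_i,1_A^{(n-2)}) \in \langle 0 \rangle$ for some $i$'') is vacuously true, because $g(a_1^n) \in \langle 0 \rangle = \{0\}$ contradicts $0 \neq g(a_1^n)$. Thus any $s \in S$ (for instance $s = 1_A$) witnesses the property.

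For uniqueness, let $Q$ be an arbitrary weakly $n$-ary $S$-prime hyperideal of $A$. By Theorem \ref{121}, $S^{-1}Q$ is a weakly $n$-ary prime hyperideal of $S^{-1}A$, and in particular a proper hyperideal. Since $S^{-1}A$ is a hyperfield, every nonzero element is invertible, so the only proper hyperideal of $S^{-1}A$ is the zero hyperideal. Consequently $S^{-1}Q = \{0_{S^{-1}A}\}$.

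Next I would pull this back to $A$. Take any $q \in Q$; then $\frac{q}{1_A} = \frac{0}{1_A}$ in $S^{-1}A$, so by the equivalence relation defining the Krasner $(m,n)$-hyperring of fractions there exists $s \in S$ with $g(s, q, 1_A^{(n-2)}) = 0$. Since $0 \notin S$ we have $s \neq 0$, and because $A$ is an $n$-ary hyperintegral domain this forces $q = 0$. Hence $Q \subseteq \langle 0 \rangle$, and since $0 \in Q$ always we conclude $Q = \langle 0 \rangle$.

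The only mildly delicate point is unpacking the equivalence relation in $S^{-1}A$ to extract the witness $s \in S$ with $g(s,q,1_A^{(n-2)}) = 0$; this is a routine bookkeeping step already implicit in the proofs of the preceding localization theorems. The real substance lies in combining the hyperfield triviality of proper hyperideals with the no-zero-divisor property of an $n$-ary hyperintegral domain.
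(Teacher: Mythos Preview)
Your argument is correct and takes a genuinely different route from the paper's. The paper proceeds directly: assuming $Q \neq \langle 0\rangle$, it picks a nonzero $a \in Q$, uses that $\frac{a}{1_A}$ is invertible in the hyperfield $S^{-1}A$ to find $\frac{x}{s}$ with $G(\frac{a}{1_A},\frac{x}{s},\frac{1_A}{1_A}^{(n-2)})=\frac{1_A}{1_A}$, and then unwinds this equality to produce an element $g(t,a,x,1_A^{(n-3)})$ lying simultaneously in $Q$ (because $a\in Q$) and in $S$, contradicting $Q\cap S=\varnothing$. The hyperintegral-domain hypothesis is used only tacitly, to guarantee $\frac{a}{1_A}\neq 0$ in $S^{-1}A$.

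By contrast, you route the argument through Theorem \ref{121}: localizing $Q$ to a weakly $n$-ary prime (hence proper) hyperideal of the hyperfield $S^{-1}A$, which must then be zero, and pulling back via the domain hypothesis. Your approach reuses an already-established transfer theorem and makes the role of the hyperintegral-domain hypothesis explicit in the pull-back step; the paper's approach is more self-contained and avoids the dependence on Theorem \ref{121}, at the cost of a small computation with the equivalence relation. Both are clean; yours better highlights why both hypotheses are needed.
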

\begin{proof}
Assume that $Q \neq 0$ is a  weakly $n$-ary $S$-prime hyperideal of $A$. Take $a \in Q \backslash \{0\}$. Then there exists $x \in A \backslash \{0\}$ and $s \in S$ satisfying $G(\frac{a}{1_A},\frac{x}{s},\frac{1_A}{1_A}^{(n-2)})=\frac{g(a,x,1_A^{(n-2)})}{g(s,1_A^{(n-1)})}=\frac{1_A}{1_A}$ as $S^{-1}A$ is a hyperfield. Then we conclude that there exists $t \in S$ such that    $0 \in g(t,f(g(a,x,1_A^{(n-2)}),-g(s,1_A^{(n-1)}),0^{(m-2)}),1_A^{(n-2)})=f(g(t,a,x,1_A^{(n-3)}),-g(t,s,1_A^{(n-2)}),0^{(m-2)})$. Therefore we get  $g(t,a,x,1_A^{(n-3)}) \in f(g(t,s,1_A^{(n-2)}),0^{(m-1)}) \subseteq S$. Since $0 \neq g(t,a,x,1_A^{(n-3)}) \in Q$, we get $Q \cap S \neq \varnothing$ which is impossible. Thus $\langle 0 \rangle$ is the only weakly $n$-ary $S$-prime hyperideal of $A$.
\end{proof}
Recall from \cite{d1} that a mapping $h : A_1 \longrightarrow A_2$ is called a homomorphism where $(A_1, f_1, g_1)$ and $(A_2, f_2, g_2)$ are commutative Krasner $(m, n)$-hyperrings if for all $a^m _1, b^n_ 1 \in A_1$ we have
\begin{itemize}
\item[\rm{(i)}]~$h(f_1(a_1,\cdots, a_m)) = f_2(h(a_1),\cdots,h(a_m)),$
\item[\rm{(ii)}]~$h(g_1(b_1,\cdots, b_n)) = g_2(h(b_1),\cdots,h(b_n))$
\item[\rm{(iii)}]~$h(1_{A_1})=1_{A_2}.$
\end{itemize}
\begin{theorem} \label{122}
Let $(A_1,f_1,g_1)$ and $ (A_2,f_2,g_2)$ be two commutative Krasner $(m,n)$-hyperrings, $h:A_1 \longrightarrow A_2$  a monomorphism and $S \subseteq A_1$  an $n$-ary multiplicative set. If  $Q_2$ is a weakly  $n$-ary $h(S)$-prime hyperideal of $A_2$, then $h^{-1}(Q_2)$ is a weakly $n$-ary $S$-prime hyperideal of $A_1$. 
\end{theorem}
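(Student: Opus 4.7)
The plan is to verify the definition of a weakly $n$-ary $S$-prime hyperideal for $h^{-1}(Q_2)$ by transporting the corresponding property of $Q_2$ through $h$, using injectivity only at the one point where the nonzero hypothesis must be preserved.

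First I would check the two prerequisites. That $h^{-1}(Q_2)$ is a hyperideal of $A_1$ is standard from the homomorphism axioms (i)--(iii) and the fact that $h$ of a canonical hypergroup sends $0$ to $0$. For disjointness, suppose $s \in S \cap h^{-1}(Q_2)$; then $h(s) \in h(S) \cap Q_2$, contradicting the hypothesis $Q_2 \cap h(S) = \varnothing$ that is built into $Q_2$ being weakly $n$-ary $h(S)$-prime.

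Next I would fix the witness. Since $Q_2$ is weakly $n$-ary $h(S)$-prime, there exists $h(s) \in h(S)$, with $s \in S$, such that for every $b_1^n \in A_2$ with $0 \neq g_2(b_1^n) \in Q_2$ we have $g_2(h(s), b_i, 1_{A_2}^{(n-2)}) \in Q_2$ for some $i$. I claim this same $s$ witnesses the weakly $S$-prime property for $h^{-1}(Q_2)$. Take $a_1^n \in A_1$ with $0 \neq g_1(a_1^n) \in h^{-1}(Q_2)$. Applying $h$ yields $h(g_1(a_1^n)) = g_2(h(a_1),\ldots,h(a_n)) \in Q_2$. This is where injectivity of $h$ is essential: since $g_1(a_1^n) \neq 0$ and $h(0) = 0$, the element $g_2(h(a_1),\ldots,h(a_n)) = h(g_1(a_1^n))$ is nonzero in $A_2$.

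Applying the weakly $h(S)$-prime property to $b_j = h(a_j)$ then produces an index $i$ with $g_2(h(s), h(a_i), 1_{A_2}^{(n-2)}) \in Q_2$. Using $h(1_{A_1}) = 1_{A_2}$ and the multiplicative compatibility, this element equals $h(g_1(s, a_i, 1_{A_1}^{(n-2)}))$, so $g_1(s, a_i, 1_{A_1}^{(n-2)}) \in h^{-1}(Q_2)$. This is exactly the weakly $n$-ary $S$-prime condition for $h^{-1}(Q_2)$. The only subtle point in the whole argument is the preservation of nonzeroness, which is handled by the monomorphism hypothesis; everything else is a routine application of the homomorphism axioms.
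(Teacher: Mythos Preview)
Your proof is correct and follows essentially the same approach as the paper's own proof: fix the witness $s\in S$ from the weakly $h(S)$-prime property of $Q_2$, push $0\neq g_1(a_1^n)\in h^{-1}(Q_2)$ through $h$ using injectivity to preserve nonzeroness, and then pull back the conclusion via the homomorphism axioms. The only difference is that you spell out the hyperideal and disjointness prerequisites a bit more explicitly than the paper does.
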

\begin{proof}
 Assume that $Q_2$ is a weakly $n$-ary $h(S)$-prime hyperideal of $A_2$. Then there exists $s \in S$ such that for all $b_1^n \in A_2$ with $0 \neq g_2(b_1^n) \in Q_2$, we have $g_2(h(s),b_i,1_{A_2}^{(n-2)}) \in Q_2$ for some $i \in \{1,\cdots,n\}$. Put $Q_1=h^{-1}(Q_2)$. It is easy to see that $Q_1 \cap S = \varnothing$. Let $0 \neq g_1(a_1^n) \in Q_1$ for $a_1^n \in A_1$. Then $0 \neq h(g_1(a_1^n))=g_2(h(a_1),...,h(a_n)) \in Q_2$ as $h$ is a monomorphism. So, we have $g_2(h(s),h(a_i),1_{A_2}^{(n-2)})=h(g_1(s,a_i,1_{A_1}^{(n-2)})) \in Q_2$ for some $i \in \{1,\cdots,n\}$ which implies $g_1(s,a_i,1_{A_1}^{(n-2)}) \in h^{-1}(Q_2)=Q_1$. Consequently, $h^{-1}(Q_2)$ is a weakly $n$-ary $S$-prime hyperideal of $A_1$.
\end{proof}
\begin{corollary}
Let $S \subseteq A_1$  be an $n$-ary multiplicative set. If $A_1$ is a subhyperring of $A_2$ and $Q_2$ is a weakly $n$-ary $S$-prime hyperideal of $A_2$, then $Q_2 \cap A_1$ is a weakly $n$-ary $S$-prime hyperideal of $A_1$.
\end{corollary}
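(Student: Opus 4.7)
The plan is to derive this corollary directly from Theorem \ref{122} by choosing $h$ to be the inclusion map. Since $A_1$ is a subhyperring of $A_2$, the inclusion $\iota : A_1 \hookrightarrow A_2$, defined by $\iota(a)=a$ for all $a \in A_1$, preserves the $m$-ary hyperoperation, the $n$-ary operation, and sends $1_{A_1}$ to $1_{A_2}$ (once we observe that the scalar identity of the subhyperring must coincide with that of $A_2$, which is the reason we require $A_1$ to be a subhyperring rather than just a subset closed under the operations).

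First I would verify that $\iota$ is a monomorphism in the sense of the paper's definition. This is immediate: items (i), (ii), (iii) of the homomorphism definition hold because the operations on $A_1$ are the restrictions of those on $A_2$, and injectivity is obvious. Next I would observe that $\iota(S) = S$, so saying that $Q_2$ is weakly $n$-ary $S$-prime in $A_2$ is precisely saying that $Q_2$ is weakly $n$-ary $\iota(S)$-prime in $A_2$.

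Then I would apply Theorem \ref{122} to conclude that $\iota^{-1}(Q_2)$ is a weakly $n$-ary $S$-prime hyperideal of $A_1$. Finally I would identify $\iota^{-1}(Q_2) = \{a \in A_1 \mid \iota(a) \in Q_2\} = \{a \in A_1 \mid a \in Q_2\} = Q_2 \cap A_1$, which yields the desired conclusion. I do not anticipate any genuine obstacle; the only point requiring a line of justification is the identification of the scalar identity of $A_1$ with that of $A_2$, which is part of being a subhyperring.
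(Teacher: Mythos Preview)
Your proposal is correct and follows essentially the same approach as the paper: take the inclusion map $h:A_1\to A_2$, $h(a)=a$, note it is a monomorphism with $h(S)=S$, apply Theorem \ref{122}, and identify $h^{-1}(Q_2)=Q_2\cap A_1$. The paper's proof is just a terser version of what you wrote, omitting the routine verifications you spelled out.
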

\begin{proof}
Consider the monomorphism $h : A_1 \longrightarrow A_2$ , defined by $h(a)=a$. Since $h^{-1}(Q_2)=Q_2 \cap A_1$, we conclude that $Q_2 \cap A_1$ is a weakly $n$-ary $S$-prime hyperideal of $A_1$, by Theorem \ref{122}.
\end{proof}
Assume that $(A_1, f_1, g_1)$ and $(A_2, f_2, g_2)$ are two commutative Krasner $(m,n)$-hyperrings such that $1_{A_1}$ and $1_{A_2}$ are two scalar identities of $A_1$ and $A_2$, respectively. Then the $(m, n)$-hyperring $(A_1 \times A_2, f ,g )$ is defined by $m$-ary hyperoperation
$f $ and $n$-ary operation $g$, as follows:

$\hspace{1cm} f((x_{1}, y_{1}),\cdots,(x_m,y_m)) = \{(x,y) \ \vert \ \ x \in f_1(x_1^m), y \in f_2(y_1^m) \}$

$\hspace{1cm} g ((a_1,b_1),\cdots,(a_n,b_n)) =(g_1(a_1^n),g_2(b_1^n)) $,\\
for all $x_1^m,a_1^n \in A_1$ and $y_1^m,b_1^n \in A_2$ \cite{mah2}. 
\begin{theorem} \label{cart}
Assume that $(A_1, f_1, g_1)$ and $(A_2, f_2, g_2)$ are two commutative Krasner $(m,n)$-hyperrings such that $1_{A_1}$ and $1_{A_2}$ are two scalar identities of $A_1$ and $A_2$, respectively. Suppose that $S_1 \subseteq A_1$ and $S_2 \subseteq A_2$ are $n$-ary multiplicative sets and, $Q_1$ and $Q_2$ are nonzero hyperideals of $A_1$ and $A_2$, respectively. Then the following assertions are equivalent:
 \begin{itemize} 
\item[\rm{(i)}]~ $Q_1 \times Q_2$ is a weakly $n$-ary $S_1 \times S_2$-prime hyperideal of $A_1 \times A_2$.
\item[\rm{(ii)}]~ $Q_1$ is an $n$-ary $S_1$-prime hyperideal of $A_1$  and $Q_2 \cap S_2 \neq \varnothing$ or $Q_2$ is an $n$-ary $S_2$-prime hyperideal of $A_2$  and $Q_1 \cap S_1  \neq \varnothing$
\item[\rm{(iii)}]~ $Q_1 \times Q_2$ is an $n$-ary $S_1 \times S_2$-prime hyperideal of $A_1 \times A_2$.
\end{itemize} 
\end{theorem}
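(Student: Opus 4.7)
My plan is to prove the equivalence by the cycle (iii) $\Longrightarrow$ (i) $\Longrightarrow$ (ii) $\Longrightarrow$ (iii). The implication (iii) $\Longrightarrow$ (i) is immediate from the definitions, because an $n$-ary $S$-prime hyperideal is automatically weakly $n$-ary $S$-prime (the ``weak'' version only drops the zero product from consideration). The substantive content sits in (i) $\Longrightarrow$ (ii); (ii) $\Longrightarrow$ (iii) will then be a short direct verification using the fact that a hyperideal absorbs multiplication.

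For (i) $\Longrightarrow$ (ii), let $(s_1,s_2) \in S_1 \times S_2$ be a witness for weak $(S_1 \times S_2)$-primeness of $Q_1 \times Q_2$. The first step exploits the nonzero hypothesis on $Q_1$ and $Q_2$: pick $0 \neq a \in Q_1$ and $0 \neq q \in Q_2$ and apply the weak prime condition to the $n$-tuple $((1_{A_1},q),(a,1_{A_2}),(1_{A_1},1_{A_2}),\dots,(1_{A_1},1_{A_2}))$, whose product is $(a,q) \neq 0$ and lies in $Q_1 \times Q_2$. Inspecting the three possible positions $i$ at which the conclusion can hold shows that $i=1$ forces $s_1 \in Q_1$, $i=2$ forces $s_2 \in Q_2$, and any $i \geq 3$ forces $(s_1,s_2) \in Q_1 \times Q_2$, contradicting $(Q_1 \times Q_2) \cap (S_1 \times S_2) = \varnothing$. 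Hence $s_1 \in Q_1 \cap S_1$ or $s_2 \in Q_2 \cap S_2$. Suppose (after symmetry) that $s_1 \in Q_1$; to finish (ii) it remains to show $Q_2$ is $n$-ary $S_2$-prime with witness $s_2$. If $g_2(b_1^n) \in Q_2$, apply the weak prime condition to $((a,b_1),(1_{A_1},b_2),\dots,(1_{A_1},b_n))$, whose product $(a,g_2(b_1^n))$ is nonzero because $a \neq 0$ and lies in $Q_1 \times Q_2$. The conclusion then yields $g_2(s_2,b_i,1_{A_2}^{(n-2)}) \in Q_2$ for some $i$, as desired.

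For (ii) $\Longrightarrow$ (iii), I assume without loss of generality that $Q_1$ is $n$-ary $S_1$-prime with witness $s_1$ and that $s_2 \in Q_2 \cap S_2$; then I claim $(s_1,s_2)$ witnesses $(S_1 \times S_2)$-primeness of $Q_1 \times Q_2$. Disjointness $(Q_1 \times Q_2) \cap (S_1 \times S_2) = \varnothing$ follows from $Q_1 \cap S_1 = \varnothing$. Given $g((a_1,b_1),\dots,(a_n,b_n)) = (g_1(a_1^n),g_2(b_1^n)) \in Q_1 \times Q_2$, the $S_1$-primeness of $Q_1$ provides an index $i$ with $g_1(s_1,a_i,1_{A_1}^{(n-2)}) \in Q_1$, while $g_2(s_2,b_i,1_{A_2}^{(n-2)}) \in Q_2$ holds automatically because $s_2 \in Q_2$ and $Q_2$ is a hyperideal.

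The main obstacle is the bookkeeping in (i) $\Longrightarrow$ (ii): one must carefully craft the test $n$-tuples so that (a) the $n$-ary product stays nonzero (which is where the assumption that $Q_1$ and $Q_2$ are both nonzero is essential), (b) the product lands in $Q_1 \times Q_2$, and (c) the positional analysis of where the weak prime conclusion can occur genuinely yields the dichotomy and the component-wise $S_i$-prime conclusion using the same witness $(s_1,s_2)$ fixed at the outset. Everything else is routine verification.
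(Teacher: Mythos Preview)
Your proof is correct and follows essentially the same cycle (iii)$\Rightarrow$(i)$\Rightarrow$(ii)$\Rightarrow$(iii) as the paper, with the same core idea in (i)$\Rightarrow$(ii): craft a test $n$-tuple whose product is a nonzero element of $Q_1\times Q_2$ to force the fixed witness $(s_1,s_2)$ into one factor, then use a second test tuple (kept nonzero via a fixed nonzero element of the other factor) to read off component-wise $S_i$-primeness. The only cosmetic differences are that the paper handles the symmetric case ($s_2\in Q_2$, then shows $Q_1$ is $S_1$-prime) and uses a nonzero $r\in Q_2\cap S_2$ rather than re-using your element $a$, and the paper states explicitly that $Q_1\cap S_1=\varnothing$ follows from $(Q_1\times Q_2)\cap(S_1\times S_2)=\varnothing$, which you should also note to justify that $Q_2\cap S_2=\varnothing$ before declaring $Q_2$ to be $n$-ary $S_2$-prime.
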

\begin{proof}
(i) $\Longrightarrow$ (ii) Assume that $(0,0) \neq (a,b) \in Q_1 \times Q_2$ for some $a \in Q_1$ and $b \in Q_2$. So, we have $(0,0) \neq (a,b)=g((a,1_{A_2}),(1_{A_1},1_{A_2})^{(n-2)},(1_{A_1},b))\in Q_1 \times Q_2$. Then there exist $s_1 \in S_1$ and $s_2 \in S_2$ such that  $g((s_1,s_2),(a,1_{A_2}),(1_{A_1},1_{A_2})^{(n-2)})=(g_1(s_1,a,1_{A_1}^{(n-2)}),g_2(s_2,1_{A_2}^{(n-1)})) \in Q_1 \times Q_2$ or $g((s_1,s_2),(1_{A_1},b),(1_{A_1},1_{A_2})^{(n-2)})=(g_1(s_1,1_{A_1}^{(n-1)}),
g_2(s_2,b,1_{A_2}^{(n-2)})) \in Q_1 \times Q_2$ as $Q_1 \times Q_2$ is a weakly $n$-ary $S_1 \times S_2$-prime hyperideal of $A_1 \times A_2$. This implies that $Q_1 \cap S_1  \neq \varnothing$ or $Q_2 \cap S_2 \neq \varnothing$. Consider $Q_2 \cap S_2 \neq \varnothing$. It follows that $Q_1 \cap S_1 = \varnothing$ because $(Q_1 \times Q_2) \cap (S_1 \times S_2)= \varnothing$. Now, let $g(a_1^n) \in Q_1$ for $a_1^n \in A_1$. From $Q_2 \cap S_2 \neq \varnothing$ it follows that there exists $0 \neq r \in Q_2 \cap S_2$. Then we have $0 \neq (g_1(a_1^n),g_2(r,1_{A_2}^{(n-1)}))=g((a_1,r),(a_2,1_{A_2}),\cdots(a_n,1_{A_2})) \in Q_1 \times Q_2$. Therefore we get the result that $(g_1(s_1,a_1,1_{A_1}^{(n-2)}),g_2(s_2,r,1_{A_2}^{(n-2)}))=g((s_1,s_2),(a_1,r),(1_{A_1},1_{A_2})^{(n-2)}) \in Q_1 \times Q_2$ or $(g_1(s_1,a_i,1_{A_1}^{(n-2)}),g_2(s_2,1_{A_2}^{(n-1)}))=g((s_1,s_2),(a_i,1_{A_2}),(1_{A_1},1_{A_2})^{(n-2)})\in Q_1 \times Q_2$ for some $ i \in \{2,\cdots,n\}$. Thus $g_1(s_1,a_i,1_{A_1}) \in Q_1$ which shows 
$Q_1$ is an $n$-ary $S_1$-prime hyperideal of $A_1$.\\

(ii) $\Longrightarrow$ (iii) Let $Q_1$ be an $n$-ary $S_1$-prime hyperideal of $A_1$  and $Q_2 \cap S_2  \neq \varnothing$. So, there exists $s_2 \in Q_2 \cap S_2$. Let $g((a_1,b_1),\cdots,(a_n,b_n))=(g_1(a_1^n),g_2(b_1^n)) \in Q_1 \times Q_2$ for some $a_1^n \in A_1$ and $b_1^n \in A_2$. Since $Q_1$ is an $n$-ary $S_1$-prime hyperideal of $A_1$ and $g_1(a_1^n) \in Q_1$, there exists $s_1 \in S_1$ such that $g_1(s_1,a_i,1_{A_1}^{(n-2)}) \in Q_1$ for some $i \in \{1,\cdots,n\}$. This implies that $g((s_1,s_2),(a_i,b_i),(1_{A_1},1_{A_2})^{(n-2)})=(g_1(s_1,a_i,1_{A_2}^{(n-2)}),g_2(s_2,b_i,1_{A_1}^{(n-2)})) \in Q_1 \times Q_2$. Thus $Q_1 \times Q_2$ is an $n$-ary $S_1 \times S_2$-prime hyperideal of $A_1 \times A_2$. Similarly, one can prove that $Q_1 \times Q_2$ is an $n$-ary $S_1 \times S_2$-prime hyperideal of $A_1 \times A_2$ if $Q_2$ is an $n$-ary $S_2$-prime hyperideal of $A_2$  and $Q_1 \cap S_1 \neq \varnothing$.\\

(iii) $\Longrightarrow$ (i) It is obvious.
\end{proof}
\begin{corollary}
Let  $(A_1, f_1, g_1), \cdots, (A_u, f_u, g_u)$ be  commutative Krasner $(m,n)$-hyperrings,   $S_1 \subseteq A_1,\cdots, S_u \subseteq A_u$  $n$-ary multiplicative sets and  $Q_1^u$   hyperideals of $A_1^u$, respectively. Then $Q_1 \times \cdots, \times Q_u$ is a weakly $n$-ary $(S_1 \times \cdots, \times S_u)$-prime hyperideal of $A_1 \times \cdots, \times A_u$ if and only if  $Q_i$ is an $n$-ary $S_i$-prime hyperideal of $A_i$  for some $ i \in \{1,\cdots,u\}$ and $Q_j \cap S_j \neq \varnothing$ for all $j \neq i$
\end{corollary}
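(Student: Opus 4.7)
The plan is to proceed by induction on $u$, with the base case $u = 2$ being exactly Theorem \ref{cart}. For $u \geq 3$, I would identify the $u$-fold Cartesian product with a binary one, writing
\[
A_1 \times \cdots \times A_u \;=\; (A_1 \times \cdots \times A_{u-1}) \times A_u,
\]
and analogously treating $Q_1 \times \cdots \times Q_u$ and $S_1 \times \cdots \times S_u$ as binary products whose first factors are the $(u-1)$-fold products. This reduces the $u$-fold situation to a two-factor one, where Theorem \ref{cart} can be applied directly to the outer decomposition (note that since each $Q_i$ is assumed nonzero, so is the intermediate product $Q_1 \times \cdots \times Q_{u-1}$, so the hypotheses of Theorem \ref{cart} are met).

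For the forward direction, applying Theorem \ref{cart} to this binary decomposition yields two cases: either (a) $Q_1 \times \cdots \times Q_{u-1}$ is an $n$-ary $(S_1 \times \cdots \times S_{u-1})$-prime hyperideal and $Q_u \cap S_u \neq \varnothing$, or (b) $Q_u$ is an $n$-ary $S_u$-prime hyperideal and $(Q_1 \times \cdots \times Q_{u-1}) \cap (S_1 \times \cdots \times S_{u-1}) \neq \varnothing$, which unpacks as $Q_j \cap S_j \neq \varnothing$ for every $j \leq u-1$. In case (a), the inductive hypothesis (invoked via the equivalence of conditions (i) and (iii) in Theorem \ref{cart}, so that $n$-ary $S$-primeness of the product gives weakly $n$-ary $S$-primeness, to which the IH applies) produces an index $i \leq u-1$ such that $Q_i$ is $n$-ary $S_i$-prime and the remaining $Q_j \cap S_j \neq \varnothing$; combining with $Q_u \cap S_u \neq \varnothing$ finishes this case. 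In case (b), we simply take $i = u$.

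For the converse, suppose some $Q_i$ is $n$-ary $S_i$-prime and $Q_j \cap S_j \neq \varnothing$ for every $j \neq i$. If $i = u$, then $Q_j \cap S_j \neq \varnothing$ for all $j < u$ forces $(Q_1 \times \cdots \times Q_{u-1}) \cap (S_1 \times \cdots \times S_{u-1}) \neq \varnothing$, and a single application of Theorem \ref{cart} to the binary decomposition yields the conclusion. If $i < u$, then by the inductive hypothesis, $Q_1 \times \cdots \times Q_{u-1}$ is weakly $n$-ary $(S_1 \times \cdots \times S_{u-1})$-prime; by the (i) $\Longleftrightarrow$ (iii) equivalence in Theorem \ref{cart}, it is thus also $n$-ary $(S_1 \times \cdots \times S_{u-1})$-prime, and together with $Q_u \cap S_u \neq \varnothing$, a final invocation of Theorem \ref{cart} delivers the weakly $n$-ary $(S_1 \times \cdots \times S_u)$-primeness of $Q_1 \times \cdots \times Q_u$.

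The main obstacle is bookkeeping rather than conceptual: one must consistently use the equivalence of the \emph{weakly} and non-weakly versions for product hyperideals (supplied by (i) $\Longleftrightarrow$ (iii) of Theorem \ref{cart}) so that the inductive hypothesis, which is phrased only in terms of the weakly-version, can be upgraded to the $n$-ary $S$-primeness required as a hypothesis by Theorem \ref{cart} when recomposing the product. Beyond this, the argument is a clean bifurcation according to whether the distinguished index $i$ lies in the first $(u-1)$ factors or equals $u$.
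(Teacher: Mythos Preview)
Your proposal is correct and follows essentially the same approach as the paper: induction on $u$, with Theorem \ref{cart} as the base case, and the inductive step handled by writing $Q_1\times\cdots\times Q_u=(Q_1\times\cdots\times Q_{u-1})\times Q_u$ and reapplying Theorem \ref{cart}. The paper's proof is considerably more terse (it simply states the decomposition and declares ``we are done''), whereas you spell out the bifurcation on whether the distinguished index lies in the first $u-1$ factors or equals $u$, and you make explicit the use of the (i)$\Leftrightarrow$(iii) equivalence to pass between the weakly and non-weakly notions on the intermediate product---details the paper leaves implicit.
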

\begin{proof}
We use the induction on $u$. The claim is clear for $u=1$. If $u=2$, then we are done by Theorem \ref{cart}. Let the claim be true for all $i  < u$.  By applying Theorem \ref{cart} to $Q=Q^{\prime} \times Q_u$ where $Q^{\prime}=Q_1 \times \cdots, \times Q_{u-1}$, we are done.
\end{proof}
\begin{theorem}
Let $(A_1, f_1, g_1)$ and $(A_2, f_2, g_2)$ be two commutative Krasner $(m,n)$-hyperrings, $1_{A_1}$ and $1_{A_2}$  scalar identities of $A_1$ and $A_2$, respectively, and $S_1 \subseteq A_1$ and $S_2 \subseteq A_2$  $n$-ary multiplicative sets. If $A_1$ and $A_2$ are hyperfields, then every proper hyperideal of $A_1 \times A_2$ is weakly $S$-prime.
\end{theorem}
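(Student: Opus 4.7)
The plan is to first completely classify the proper hyperideals of $A_1 \times A_2$. Since $A_i$ is a hyperfield, every nonzero element of $A_i$ is invertible, so any hyperideal $Q_i$ of $A_i$ containing some $a \neq 0$ must also contain $g_i(a,a^{-1},1_{A_i}^{(n-2)}) = 1_{A_i}$ and hence equal $A_i$. Thus the only hyperideals of $A_i$ are $\{0\}$ and $A_i$ itself. A straightforward componentwise check---using that $(a,0) = g((a,b),(1_{A_1},0),(1_{A_1},1_{A_2})^{(n-2)})$ and $(0,b) = g((a,b),(0,1_{A_2}),(1_{A_1},1_{A_2})^{(n-2)})$ both lie in a hyperideal whenever $(a,b)$ does, and that $(a,b) \in f((a,0),(0,b),(0,0)^{(m-2)})$---shows that every hyperideal $Q$ of $A_1 \times A_2$ splits as $Q_1 \times Q_2$ with $Q_i$ a hyperideal of $A_i$. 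Combined with the previous observation, the proper hyperideals of $A_1 \times A_2$ are exactly $\{(0,0)\}$, $\{0\} \times A_2$, and $A_1 \times \{0\}$.

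Next, setting $S = S_1 \times S_2$ (with the implicit assumption $0 \notin S_1, S_2$, which ensures $Q \cap S = \varnothing$ in each case), I would verify weak $n$-ary $S$-primeness case by case. The case $Q = \{(0,0)\}$ is immediate, because the hypothesis $(0,0) \neq g((a_1,b_1),\ldots,(a_n,b_n)) \in Q$ is never satisfied. For $Q = \{0\} \times A_2$, suppose $(0,0) \neq (g_1(a_1^n), g_2(b_1^n)) \in \{0\} \times A_2$; then $g_1(a_1^n) = 0$ while $g_2(b_1^n) \neq 0$. Granted the auxiliary fact below, some $a_i$ must be $0$, so $(a_i,b_i) \in Q$, and for any $(s_1,s_2) \in S$ we obtain
\[
g((s_1,s_2),(a_i,b_i),(1_{A_1},1_{A_2})^{(n-2)}) = \bigl(0,\, g_2(s_2,b_i,1_{A_2}^{(n-2)})\bigr) \in \{0\} \times A_2,
\]
which gives the required absorption. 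The case $Q = A_1 \times \{0\}$ is entirely symmetric.

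The principal technical point is the auxiliary zero-divisor-free claim for hyperfields: if $a_1,\ldots,a_n \in A_1 \backslash \{0\}$, then $g_1(a_1^n) \neq 0$. I would argue by contradiction: given $g_1(a_1^n) = 0$ with all $a_j \neq 0$, pick $a_1^{-1}$ and compute $0 = g_1(a_1^{-1}, g_1(a_1^n), 1_{A_1}^{(n-2)}) = g_1(g_1(a_1^{-1},a_1,1_{A_1}^{(n-2)}), a_2, \ldots, a_n) = g_1(1_{A_1}, a_2, \ldots, a_n)$ by associativity and invertibility; iterating the same reduction with $a_2^{-1}, a_3^{-1}, \ldots, a_{n-1}^{-1}$ one eventually lands at $g_1(1_{A_1}^{(n-1)}, a_n) = a_n = 0$, contradicting $a_n \neq 0$. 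This absence-of-zero-divisors lemma is the main obstacle; once it is dispatched, the classification above together with the routine three-case analysis completes the proof.
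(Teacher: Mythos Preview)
Your argument is correct. Both you and the paper begin by classifying the proper hyperideals of $A_1 \times A_2$ as $\{(0,0)\}$, $\{0\} \times A_2$, and $A_1 \times \{0\}$, but then diverge: the paper disposes of all three in one line by invoking Theorem~\ref{cart} (the product characterization of weakly $n$-ary $S$-prime hyperideals), while you verify each case by hand via the zero-divisor-free lemma for hyperfields. Your route is more self-contained and in fact tidier on one point: Theorem~\ref{cart} as stated assumes both factor hyperideals are nonzero, so the paper's appeal to it for $\{(0,0)\}$ is a little loose, whereas your vacuity argument for $\{(0,0)\}$ is immediate. On the other hand, the paper's route is shorter once Theorem~\ref{cart} is in hand; and note that applying the (ii)$\Rightarrow$(iii) direction of that theorem with $Q_1=\{0\}$ an $n$-ary $S_1$-prime hyperideal of the hyperfield $A_1$ implicitly relies on exactly your no-zero-divisors lemma, so the underlying mathematical content of the two proofs is the same.
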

\begin{proof}
Let $A_1$ and $A_2$ be hyperfields. Then $A_1 \times A_2$ has exactly three proper hyperideals $\{0\} \times A_2$, $A_1 \times \{0\}$ and $\{0\} \times \{0\}$. Thus, we conclude that these hyperideals are weakly $S$-prime by Theorem \ref{cart}.
\end{proof}


\end{document}